\renewcommand{\t}{^\top}
\renewcommand{\k}{^{(k)}}
\newcommand{\kp}{^{(k+1)}}
\newcommand{\km}{^{(k-1)}}
\newcommand{\zo}{^{(0)}}
\newcommand{\ol}{^{(1)}}
\renewcommand{\i}{_i}
\newcommand{\ik}{_{i_k}}
\newcommand{\Sk}{\mathcal{S}_k}
\newcommand{\Ik}{\mathcal{I}_k}
\newcommand{\Jk}{\mathcal{J}_k}
\newcommand{\s}{^{\star}}
\newcommand{\nm}[1]{\| #1 \|_2}
\newcommand{\nmsq}[1]{\| #1 \|_2^2}
\newcommand{\nmfsq}[1]{\| #1 \|_F^2}
\newcommand{\inn}[1]{\langle #1 \rangle}
\newcommand{\prob}{\operatorname{Prob}}
\newcommand{\Rn}{\mathbb{R}^n}
\newcommand{\Rmn}{\mathbb{R}^{m \times n}}
\newcommand{\E}{\mathbb{E}}
\newcommand{\al}{\alpha}
\newcommand{\be}{\beta}
\newcommand{\hf}{\frac{1}{2}}
\newcommand{\sima}{\sigma_{\max}}
\newcommand{\simi}{\sigma_{\min}}
\newcommand{\setm}{\{1,\cdots,m\}}
\newcommand{\seqk}[1]{\{ #1 \}_{k\geq0}}
\newcommand{\termaa}{\max_{ i \in[m] } \left\{ \frac{|\langle a_i, x\k\rangle- b\i|^2}{\nmsq{a\i}} \right\}}
\newcommand{\termbb}{\frac{\nmsq{Ax\k-b}}{\nmfsq{A}}}
\newcommand{\termcc}{\frac{|\langle a_i, x\k\rangle- b\i|^2}{\nmsq{a\i}}}
\newcommand{\RA}{\operatorname{Range}(A\t)}
\newtheorem{definition}{Definition}[section]
\newtheorem{corollary}[definition]{Corollary}
\newtheorem{theorem}[definition]{Theorem}
\newtheorem{lemma}[definition]{Lemma}
\newtheorem{remark}[definition]{Remark}
\newcommand{\res}[2]{\inn{a#1,#2}-b#1}
\date{}
\begin{document}
	
\title%[Deterministic Linear Convergence for GRK]%{The linear convergence of the greedy randomized Kaczmarz method is deterministic}
[On the convergence analysis of GRK]{On the convergence analysis of  the greedy randomized Kaczmarz method}
	
\author{Yansheng Su}
	\address{School of Mathematical Sciences, Beihang University, Beijing, 100191, China. }
	\email{suyansheng@buaa.edu.cn}

	\author{Deren Han}
	\address{LMIB of the Ministry of Education, School of Mathematical Sciences, Beihang University, Beijing, 100191, China. }
	\email{handr@buaa.edu.cn}
	
	\author{Yun Zeng}
	\address{School of Mathematical Sciences, Beihang University, Beijing, 100191, China. }
	\email{zengyun@buaa.edu.cn}
	
	\author{Jiaxin Xie}
	\address{LMIB of the Ministry of Education, School of Mathematical Sciences, Beihang University, Beijing, 100191, China. }
	\email{xiejx@buaa.edu.cn}
		
	\maketitle
	
	\begin{abstract}
		In this paper, we analyze the greedy randomized Kaczmarz (GRK) method proposed in Bai and Wu (SIAM J. Sci. Comput., 40(1):A592--A606, 2018)  for solving linear systems. We develop more precise greedy probability criteria to effectively select the working row from the coefficient matrix. Notably, we prove that the linear convergence of the GRK method is deterministic and demonstrate that using a tighter threshold parameter can lead to a faster convergence rate. Our result revises existing convergence analyses, which are solely based on the expected error by realizing that the iterates of the GRK method are random variables. Consequently, we obtain an improved iteration complexity for the GRK method.
		%This finding is important because the iterates of the GRK method are random variables, while existing convergence analyses rely on the expected error.
		%This is noteworthy since the iterates of the GRK method are actually random variables, existing convergence analyses are all related to the expectation of the error.
		% We prove that  the linear convergence rate of the GRK method is deterministic, i.e. not in the sense of expectation.
		%In particular, we prove that the proposed GRK method enjoys an improved deterministic linear convergence  compared to both the MIRK method and the greedy randomized Kaczmarz method. 		
%To improve the convergence property of the randomized Kaczmarz (RK) method for solving linear systems, Bai and Wu (SIAM J. Sci. Comput., 40(1):A592--A606, 2018) originally introduced a greedy probability criterion for effectively selecting the working row from the coefficient matrix and  constructed the greedy randomized Kaczmarz (GRK) method. Due to its simplicity and efficiency,  this approach has inspired numerous subsequent works in recent years, such as the capped adaptive sampling rule, the greedy augmented randomized Kaczmarz method, and the greedy randomized coordinate descent method.  Since the iterates of the GRK method are actually random variables, existing convergence analyses are all related to the expectation of the error. In this paper, we prove that  the linear convergence rate of the GRK method is deterministic, i.e. not in the sense of expectation. 
Moreover,  the Polyak's heavy ball momentum technique is incorporated to improve the performance of the GRK method. We propose a refined convergence analysis, compared with the technique used in Loizou and Richt\'{a}rik (Comput. Optim. Appl., 77(3):653--710, 2020), of momentum variants of randomized iterative methods, which shows that the proposed GRK method with momentum (mGRK) also enjoys a deterministic linear convergence. Numerical experiments show that the mGRK method is more efficient than the GRK method.
	\end{abstract}
	\let\thefootnote\relax\footnotetext{Key words: Linear systems, Kaczmarz, greedy probability criterion, heavy ball momentum, deterministic linear convergence, iteration complexity}

\let\thefootnote\relax\footnotetext{Mathematics subject classification (2020): 65F10, 65F20, 90C25, 15A06, 68W20}

\section{Introduction}
	
%It has received extensive attention in recent years due to its computational efficiency and simplicity, and has a wide range of applications in many areas of scientific computing and engineering applications such as computerized tomography \cite{hansen2021computed}, signal processing \cite{Byr04}, optimal control \cite{Pat17}, machine learning \cite{Cha08}, etc.

The Kaczmarz method \cite{kaczmarz1937angenaherte}, also known as \emph{algebraic reconstruction technique} (ART) \cite{herman1993algebraic,gordon1970algebraic}, is an iterative method for solving large-scale linear systems
	\begin{equation}\label{equ:problem}
		Ax = b, \ A \in \Rmn, b\in \mathbb{R}^n.
	\end{equation}
Throughout this paper, we assume that the linear system \eqref{equ:problem} is consistent, i.e. there exists a $ x\s $ such that $ Ax\s = b $. For any $i\in[m]:=\{1,\ldots,m\}$, we  use $ a\i $ to denote the transpose of the $i$-th row of $ A $ and use $b_i$ to denote the $i$-th  entry of $b$. We further assume $ \nm{a_i} \neq 0 $ for all $ i \in [m].$ Starting from  $x^{(0)}\in\mathbb{R}^n$, the original Kaczmarz method constructs $x\kp$ by
%Typically, the Kaczmarz method updates with the following iterative strategy
	\begin{equation}\label{equ:basiciter}
		 x\kp = x\k - \frac{\langle a\ik, x\k\rangle - b\ik}{\nmsq{a\ik}} a\ik,
	\end{equation}
where the index $ i_k = (k \mod m) +1$.
%where $i_k$ is selected from $[m] $ according to some selection rules, including cyclic \cite{kaczmarz1937angenaherte,censor1981row}, greedy \cite{Gri12}, or random \cite{strohmer2009randomized}.
%	At each iteration, the Kaczmarz method selects an index $ i_k \in [m] $
	%single row $ a\ik $ %by a certain rule
%	and projects the current iterate onto the solution set of row $ a\ik $. The procedure is described by
	%Starting from  $x^{(0)}\in\mathbb{R}^n$, the Kaczmarz method constructs $x\kp$ by
	%Typically, the Kaczmarz method updates with the following iterative strategy
%	\begin{equation}\label{equ:basiciter}
%		x\kp = x\k - \frac{a\ik\t x\k - b\i}{\nmsq{a\ik}} a\ik.
%	\end{equation}
%The convergence speed of Kaczmarz algorithm heavily depends on the ordering
%of the equations.
%In the literature, t
There are empirical evidences that selecting a working row from the matrix $A$ randomly can often lead to a better convergence of the Kaczmarz method compared to choosing it sequentially \cite{herman1993algebraic,natterer2001mathematics,feichtinger1992new,sun2021worstcase}.
The celebrated result of Strohmer and Vershynin \cite{strohmer2009randomized} shows that if the index $ i_k$ is selected randomly with probability proportional to $\|a_{i_k}\|^2_2$, then the resulting \emph{randomized  Kaczmarz} (RK) method converges linearly in expectation.
Since then, RK-type methods have received extensive attention due to their computational efficiency and scalability. They have a wide range of applications in many areas of scientific computing and engineering such as computerized tomography \cite{hansen2021computed}, signal processing \cite{Byr04}, optimal control \cite{Pat17}, and machine learning \cite{Cha08}. We refer to \cite{bai2023randomized,bai2023convergence} for a comprehensive survey  on the Kaczmarz method.

	The RK method has an evident weakness in its probability criterion for selecting working rows, which can result in slow convergence if some rows have significantly larger Euclidean norms compared to others.
	%The RK method has certain disadvantages that limit its ability to utilize real-time information for selecting working rows in each iteration. Additionally, the probability of selecting each row is directly proportional to the square of its Euclidean norm. This can result in slow or even divergent convergence if some rows have significantly larger Euclidean norms compared to others. 
	To address these limitations and improve the convergence of the RK method, several variations have been proposed, such as the accelerated  randomized Kaczmarz method \cite{liu2015accelerated,zeng2023adaptive}, the greedy Motzkin--Kaczmarz method \cite{zhang2022greedy}, and the  weighted  randomized Kaczmarz method \cite{Ste20Wei}.  In \cite{bai2018greedy}, Bai and Wu
 introduced the \emph{greedy probability criterion}, and constructed the
\emph{greedy randomized Kaczmarz} (GRK) method for solving the linear system \eqref{equ:problem}.
At the $k$-th iteration,  GRK determines a subset $\mathcal{I}_k$ of $[m]$ such that the magnitude of the residual $\langle a_{i}, x\k\rangle-b_i$ exceeds a threshold,
i.e.
\begin{equation}\label{equ:Ikorigrd}
	\Ik = \left\{  i : \frac{|\langle a\i, x\k\rangle- b\i|^2}{\nmsq{a\i}}  \geq \frac{1}{2} \left( \max_{i\in[m]} \left\{ \frac{|\langle a\i, x\k\rangle- b\i|^2}{\nmsq{a\i}} \right\} +\frac{\nmsq{Ax\k-b}}{\nmfsq{A}} \right)   \right\}.
\end{equation}
%$
%		\varepsilon_k = \frac{1}{2} \left(\frac{1}{\nmsq{Ax\k-b}} \max_{i} \left\{ \frac{|a\i\t x\k- b\i|^2}{\nmsq{a\i}} \right\} + \frac{1}{\nmfsq{A}}\right).
%$
%where
%$
%\varepsilon_k=\frac{1}{2}\left(\frac{1}{\left\|Ax^{k}-b\right\|_2^2} \max\limits _{1 \leq i \leq m}\left\{\frac{\left|\langle A_{i,:},x^{k}\rangle-b_i\right|^2}{\left\|A_{i,:}\right\|_2^2}\right\}+\frac{1}{\|A\|_F^2}\right).
%$
%Then, a modified residual vector $\tilde{r}\k$ is defined as
With the introduction of a modified residual vector defined as
$$
\tilde{r}\k_i= \begin{cases}\langle a_{i}, x\k\rangle-b_i, & \text { if } i \in \mathcal{I}_k, \\ 0, & \text { otherwise},\end{cases}
$$
the GRK method selects the index of the working row $i_k \in \mathcal{I}_k$ with  probability
\begin{equation}\label{xie-230624}
\operatorname{Prob}\left(i_k=i\right)=\frac{|\tilde{r}\k_{i}|^2}{\|\tilde{r}\k\|_2^2} .
\end{equation}
Finally, the GRK method orthogonally projects the current iterate $x^k$ onto the $i_k$ hyperplane $\{x\mid \langle a_{i_k},x\rangle=b\ik\}$ to obtain the next iterate $x^{k+1}$.
By using the above greedy approach, rows that are related to small entries of the residual vector $Ax^k-b$ may not be selected, which guarantees the efficiency of each iteration of GRK. This property leads to a faster convergence rate of GRK compared to RK.
%Subsequently, there is a large amount of work on the refinements and extensions of the RK method
In recent years,  there  has been a large amount of work on the refinements and extensions of the GRK method, such as the capped adaptive  sampling rule \cite{gower2021adaptive,yuan2022adaptively}, the greedy augmented randomized Kaczmarz method for inconsistent linear systems \cite{bai2021greedy}, the greedy randomized coordinate descent method \cite{bai2019greedy}, and the capped nonlinear Kaczmarz method \cite{zhang2022greedy}. In addition, we note that there has also been some work on non-random Kaczmarz methods \cite{chen2022fast,niu2020greedy,zhang2019new} inspired by the GRK method.

%, and  some non-random Kaczmarz methods \cite{chen2022fast,niu2020greedy,zhang2019new}.

%Since the index $i_k$  is selected randomly, the iterates of the RK-derived methods are actually random variables. So  their convergence analysis are all regarding the expectation of the error $ \E \left[\nmsq{x\k -x\s} \right]$.

In this paper, we show that the linear convergence of the GRK method is deterministic.
 In general, the convergence analyses of RK-type methods are related to the expectation of the error $ \E \left[\nmsq{x\k -x\s} \right]$.
Specifically, Bai and Wu \cite{bai2018greedy} proved that the iteration sequence of GRK satisfies
	\begin{equation}\label{equ:convforgrdintexp}
		\E [\nmsq{x\k-x\s}] \leq \left(  1-\hf\left( \frac{1}{\gamma}\nmfsq{A}+1 \right) \frac{\simi^2(A)}{\nmfsq{A}} \right)^{k-1} \left(1-\frac{\simi^2(A)}{\nmfsq{A}}\right) \nmsq{x\zo-x\s},		
	\end{equation}
where $ \gamma = \max_{ i \in [m]} \sum_{j=1, j\neq i}^{m} \nmsq{a_j} < \nmfsq{A}$. %$\gamma $ is a constant (See Theorem \ref{theo:basic}). 
We demonstrate that the greedy  strategy will always guarantee a certain reduction of $ \nmsq{x\k -x\s} $ at each iteration. As a result, the convergence bound in \eqref{equ:convforgrdintexp} is not only valid for the expectation $ \E \left[\nmsq{x\k -x\s} \right]$, but also for the quantity of $\nmsq{x\k -x\s}$. Particularly, we show that the iteration sequence of GRK satisfies
%	\begin{equation}\label{equ:convforgrdintexpreal}
%$$		\nmsq{x\k-x\s} \leq \bigg(  1-\hf\bigg( \frac{1}{\gamma}\nmfsq{A}+1 \bigg) \frac{\simi^2(A)}{\nmfsq{A}} \bigg)^{k-1} \bigg(1-\frac{\simi^2(A)}{\nmfsq{A}}\bigg) \nmsq{x\zo-x\s}.
%$$
$$		\nmsq{x\k-x\s} \leq \bigg(1-\frac{\simi^2(A)}{\gamma}\bigg)^{k-1} \bigg(1-\frac{\simi^2(A)}{\nmfsq{A}}\bigg) \nmsq{x\zo-x\s},
$$
%	\end{equation}
	%erived methods are considered.
	%The convegence rate is also, as expected, linear.
%	This main contribution will be proved in Theorem \ref{theo:basic}.
where the convergence rate is improved by a slightly adjusted row selection criterion (See Corollary \ref{main-cor}). To the best of our knowledge, this is the first time that convergence not related to the expectation has been explored for RK-derived methods. We refer to such convergence for random algorithms as \textit{deterministic}. Furthermore, our deterministic linear convergence result ensures that the GRK method can achieve a better iteration complexity than the one obtained from the expected error $\mathbb{E}[\|x^{(k)}-x\s\|^2_2]$.

%We note that the set $ \Ik $ actually consists of residuals larger than a certain threshold, i.e. \eqref{key-obs} holds for any $i\in\Ik$. Therefore, instead of taking the expectation of $ \frac{|\langle a\ik, x\k\rangle - b\ik|^2}{\nmsq{a\ik}} $, we employ a uniform bound of it, which makes the result deterministic.

Our results are primarily based on the following observation:  By the definition of $ \Ik $, we know that any index $ i \in\Ik $ satisfies
$$
		\termcc \geq  \frac{1}{2}\left( \termaa +   \termbb\right),
$$
and	since
$$\termaa \geq \sum_{i=1}^{m} \frac{\nmsq{a\i}}{\nmfsq{A}} \cdot \termcc = \termbb,$$
for any $i \in \Ik$, the following inequality always holds regardless of the probability employed
	\begin{equation}\label{equ:termrela}
		\termcc \geq \termbb,
	\end{equation}
which ensures deterministic convergence. In fact, this paper will introduce a tighter threshold parameter to determine the indexes that belong to the set $\Ik$ (See \eqref{alg-xie-1}). As a result, we obtain a tighter lower bound than \eqref{equ:termrela} (See \eqref{key-obs} and \eqref{gamma-est}).

Furthermore,  our observation makes it possible for many variants of the GRK method to achieve deterministic convergence. Particularly, we investigate the heavy ball momentum \cite{polyak1964some} variant of the GRK method and demonstrate that the linear convergence of the GRK method with momentum (mGRK) is also deterministic. In fact, the incorporation of momentum acceleration techniques with Kaczmarz-type methods has been a popular topic in the literature \cite{loizou2020momentum,liu2015accelerated,morshed2020accelerated,morshed2022sampling, zeng2023adaptive,han2022pseudoinverse,HSXDR2022}. Convergence of the heavy ball momentum variant of the randomized Kaczmarz method  has previously been analyzed in \cite{loizou2020momentum} and \cite{han2022pseudoinverse}. Additionally, we provide an alternative convergence analysis of the heavy ball momentum variant of the Kaczmarz method, where a smaller convergence factor for the Kaczmarz method with momentum can be obtained.

\subsection{Notations}
We here give some notations that will be used in the paper. For vector $x\in\mathbb{R}^n$, we use $x_i,x\t$ and $\|x\|_2$ to denote the $i$-th entry, the transpose and the Euclidean norm of $x$, respectively. For matrix $A\in\mathbb{R}^{m\times n}$, we use $a_i$, $A\t$, $A^\dagger$, $ \|A\|_F $, $ \operatorname{Range}(A) $, $\text{Rank}(A)$, $ \sima(A) $, and $ \simi(A) $ to denote the $i$-th row, the transpose, the Moore-Penrose pseudoinverse, the Frobenius norm, the range space, the rank, the largest and the smallest non-zero singular value of $ A $, respectively. Given $\mathcal{S} \subseteq [m]:=\{1,\ldots,m\}$, the cardinality of the set $\mathcal{S} $ is denoted by $|\mathcal{S} |$ and the complementary set of $\mathcal{S} $ is denoted by $\mathcal{S} ^c$, i.e. $\mathcal{S} ^c=[m]\setminus S$.
 %We denote $ \gamma = \max_{i\in[m]} \sum_{j=1, j\neq i}^{m} \nmsq{a_j}$.
%For any random variables $\xi_1$ and $\xi_2$, we use $\mathbb{E}[\xi_1]$ and $\mathbb{E}[\xi_1\lvert \xi_2]$ to denote the expectation of $\xi_1$ and the conditional expectation of $\xi_1$ given $\xi_2$.
		
\subsection{Organization}
The remainder of the paper is organized as follows. We prove the deterministic convergence of GRK in Section 2. In Section 3, we propose the momentum variant of the GRK method. In Section 4, we perform some numerical experiments to show the effectiveness of the proposed method. Finally, we conclude the paper in Section 5.

\section{Improved Greedy randomized Kaczmarz method }

In this section, we investigate the deterministic convergence of the greedy randomized Kaczmarz (GRK) method and introduce a precise probability criterion to enhance its performance.  We propose the improved GRK (iGRK) method, which is outlined in Algorithm \ref{algo:basic}. We note that, compared to the GRK method proposed by Bai and Wu \cite{bai2018relaxed} where the probability criterion \eqref{xie-230624} is used, any probability that satisfies
$$
\left\{
	\begin{matrix}
		\ 	p\k_i
 = 0, &  i \notin \Jk,\\
		\ 	p\k_i
 \geq 0, &  i \in \Jk,
	\end{matrix}
	\right. \quad  \mathrm{and} \quad  \sum_{i\in\Jk} p\k_i=1,
$$
will be appropriate for Algorithm \ref{algo:basic} \cite{miao2022greedy}. Moreover, the row selection criterion is slightly adjusted compared to \eqref{equ:Ikorigrd} for an improved convergence rate.

\begin{algorithm}[htpb]
\caption{ Improved Greedy randomized Kaczmarz (iGRK) method \label{algo:basic}}
% \cite{bai2018relaxed}
\begin{algorithmic}
\Require
 $A\in \mathbb{R}^{m\times n}$, $b\in \mathbb{R}^m$, $k=0$ and initial point $x\zo \in\mathbb{R}^n$.
\begin{enumerate}
\item[1:] Determine the set
\[
\mathcal{N}_k = \{ i\in[m]: |\inn{a_i,x\k}-b_i|\neq0 \}
\]
and compute \begin{equation}\label{equ:Gamma}
	\Gamma_k = \sum_{i\in\mathcal{N}_k} \nmsq{a_i}.
\end{equation}
%$$
%\Gamma_k = \sum_{\substack{i=1,\\|\inn{a_i,x\k}-b_i|\neq0}}^{m} \nmsq{a_i} .
%$$

\item[2:] Determine the index set of positive integers
\begin{equation}\label{alg-xie-1}
\Jk = \left\{  i : \frac{|\langle a\i, x\k\rangle- b\i|^2}{\nmsq{a\i}}  \geq \frac{1}{2} \bigg( \max_{i\in[m]} \left\{ \frac{|\langle a\i, x\k\rangle- b\i|^2}{\nmsq{a\i}} \right\} +\frac{\nmsq{Ax\k-b}}{\Gamma_k} \bigg)   \right\}.
\end{equation}
\item[3:] Select $ i_k \in  \Jk $ according to some probability
$$
			\prob(i_k = i) = p\k_i,
$$			
		where $ p\k_i = 0 \operatorname{if} i \notin \Jk $, $ p\k_i \geq 0 \operatorname{if} i \in \Jk $, and $ \sum_{i\in\Jk} p\k_i=1 $.
\item[4:] Set
$$
x\kp = x\k -  \frac{\langle a\ik, x\k\rangle - b\ik}{\nmsq{a\ik}}a\ik. 	
$$
\item[5:] If the stopping rule is satisfied, stop and go to output. Otherwise, set $k=k+1$ and return to Step $1$.
\end{enumerate}

\Ensure
  The approximate solution $x^k$.
\end{algorithmic}
\end{algorithm}

The iGRK method is well-defined as the index set $ \Jk $ is nonempty. In fact, for any $k\geq0$, we have
%First we show that for any $ k $,
%\[
%\max_{i} \left\{ \frac{|a\i\t x\k- b\i|^2}{\nmsq{a\i}} \right\} \geq \frac{\nmsq{Ax\k-b}}{\Gamma_k}.
%\]
\begin{equation}\label{xie-prof-113}
	\begin{aligned}
		\max_{i\in[m]} \left\{ \frac{|\langle a\i, x\k\rangle- b\i|^2}{\nmsq{a\i}} \right\} &\geq  \sum_{i\in\mathcal{N}_k} \frac{\nmsq{a_i}}{\Gamma_k} \frac{|\langle a\i, x\k\rangle- b\i|^2}{\nmsq{a\i}} \\
		&=  \frac{\sum_{i\in\mathcal{N}_k}|\langle a\i, x\k\rangle- b\i|^2}{\Gamma_k} 
		\\&= \frac{\sum_{i=1}^m|\langle a\i, x\k\rangle- b\i|^2}{\Gamma_k} \\
		&= \frac{\nmsq{Ax\k-b}}{\Gamma_k},
	\end{aligned}
\end{equation}
where the inequality follows from the fact that $ \max_{i\in[m]} \left\{ \frac{|\langle a\i, x\k\rangle- b\i|^2}{\nmsq{a\i}} \right\} $ is the largest weighted average of $ \frac{|\langle a\i, x\k\rangle- b\i|^2}{\nmsq{a\i}}, i\in[m]$ and the second equality follows from $ \sum_{i\in\mathcal{N}_k^c}|\langle a\i, x\k\rangle- b\i|^2 =0$. It follows from \eqref{xie-prof-113} that at least $ i_k^{\max}$ belongs to $ \Jk $, where $ i_k^{\max} \in \arg\max\limits_{i\in[m]} \left\{  \frac{|\res{\i}{x\k}|^2}{\nmsq{a\i}} \right\} $. Thus $ \Jk $ is nonempty and  the iGRK method is well-defined.

\begin{remark}
	By setting $\Gamma_k$ in  \eqref{equ:Gamma} as $\Gamma_k=\|A\|^2_F$ for all $k\geq 0$ and using \eqref{xie-230624} as the probability criterion, Algorithm \ref{algo:basic} effectively implements the original GRK method. 
	In fact, the modification in \eqref{equ:Gamma} is intended to provide a tighter threshold parameter, as $ \Gamma_k< \nmfsq{A}$ for $k\geq 1$ $($as shown in  \eqref{gamma-est}$)$. As a result, we can get a better convergence rate $($see Remark \ref{remark-xie-114-1}$)$.
\end{remark}

\subsection{Deterministic convergence}
%We have the following convergence result for Algorithm \ref{algo:basic}. We note that the proof of the following theorem is almost the same as that of the Theorem 3.1 in \cite{bai2018relaxed} except for the treatment for expectation.
The convergence result for Algorithm \ref{algo:basic} is as follows. 
%We note that the proof of this theorem is nearly identical to that of Theorem 3.1 in \cite{bai2018relaxed}, except for the handling of expectation.
%with the exception of the treatment for expectation.
%\begin{lemma}\label{lemm:gamma}
%	
%\end{lemma}
%\begin{proof}
%	
%\end{proof}
\begin{theorem}\label{theo:basic}
Suppose that $ x\zo \in \Rn $ and let $ x\s = A^\dagger b + (I-A^\dagger A)x\zo $ denote the projection of $ x\zo $ onto the solution set of $ Ax=b$. For $ k \geq 0 $, the iteration sequence $ \seqk{x\k} $ generated by Algorithm \ref{algo:basic} satisfies
\[
\nmsq{x\kp-x\s} \leq \left(1-\frac{\simi^2(A)}{\Gamma_k}\right) \nmsq{x\k-x\s},
\]
where $ \Gamma_k $ is defined as \eqref{equ:Gamma}.
%Suppose that $ x\zo \in \Rn $ and let $ x\s = A^\dagger b + (I-A^\dagger A)x\zo $ denote the projection of $ x\zo $ onto the solution set of $ Ax=b$. Then for $ k \geq 1 $, the iteration sequence $ \seqk{x\k} $ generated by Algorithm \ref{algo:basic} satisfies
%$$
%\nmsq{x\k-x\s} \leq \bigg(  1-\hf\bigg( \frac{1}{\gamma}\nmfsq{A}+1 \bigg) \frac{\simi^2(A)}{\nmfsq{A}} \bigg)^{k-1} \bigg(1-\frac{\simi^2(A)}{\nmfsq{A}}\bigg) \nmsq{x\zo-x\s},
%$$
%where $ \gamma = \max_{1 \leq i \leq m} \sum_{j=1, j\neq i}^{m} \nmsq{a_j}$.
\end{theorem}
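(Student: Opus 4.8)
The plan is to prove the stated inequality for a single step directly on the (random) iterate, never passing to an expectation. The key structural fact is that Step~4 orthogonally projects $x\k$ onto the hyperplane $\{x : \res{\ik}{x}=0\}$, and this projection is performed for whatever index $i_k$ Step~3 returns, irrespective of the sampling probabilities $p\k_i$.

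First I would record the exact one-step decrease. Because $Ax\s=b$ forces $\res{\ik}{x\s}=0$, the increment $x\kp-x\k=-\fres{\ik}{x\k}\,a\ik$ is orthogonal to $x\kp-x\s$, so the Pythagorean theorem gives
\[
\err{x\kp} = \err{x\k} - \frac{|\res{\ik}{x\k}|^2}{\nmsq{a\ik}}.
\]
Thus everything reduces to lower-bounding the step $\frac{|\res{\ik}{x\k}|^2}{\nmsq{a\ik}}$.

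The crucial point, and the one that yields determinism, is to lower-bound this step uniformly over all admissible $i_k$. By the definition of $\Jk$ in \eqref{alg-xie-1}, every $i\in\Jk$ satisfies
\[
\frac{|\res{\i}{x\k}|^2}{\nmsq{a\i}} \geq \frac12\left( \MAXTERM + \frac{\nmsq{Ax\k-b}}{\Gamma_k}\right),
\]
while \eqref{xie-prof-113} shows the maximum term is itself at least $\frac{\nmsq{Ax\k-b}}{\Gamma_k}$. Hence the right-hand side is an average of two quantities each bounded below by $\frac{\nmsq{Ax\k-b}}{\Gamma_k}$, so for \emph{every} $i_k\in\Jk$, hence every realization of Step~3,
\[
\frac{|\res{\ik}{x\k}|^2}{\nmsq{a\ik}} \geq \frac{\nmsq{Ax\k-b}}{\Gamma_k}.
\]

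It remains to turn $\nmsq{Ax\k-b}$ into a multiple of $\err{x\k}$. Since $Ax\s=b$ we have $\nmsq{Ax\k-b}=\nmsq{A(x\k-x\s)}$, and the choice $x\s=A^\dagger b+(I-A^\dagger A)x\zo$ is designed so that $x\zo-x\s=A^\dagger(Ax\zo-b)\in\RA$; by induction $x\k-x\s\in\RA$ for all $k$, since each update subtracts a multiple of $a\ik\in\RA$. On $\RA$ the bound $\nmsq{Av}\geq\simi^2(A)\nmsq{v}$ holds, so $\nmsq{A(x\k-x\s)}\geq\simi^2(A)\,\err{x\k}$. Chaining the three displays gives
\[
\err{x\kp} \leq \left(1-\frac{\simi^2(A)}{\Gamma_k}\right)\err{x\k}.
\]
I expect the main obstacle to be the uniform step bound: it is exactly what distinguishes this argument from the expectation-based analysis of \eqref{equ:convforgrdintexp}, collapsing the probabilistic averaging over Step~3 into a pointwise guarantee. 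The range-space invariance $x\k-x\s\in\RA$ is routine but indispensable, since $\simi^2(A)$ controls $A$ only on $\RA$ when $A$ is rank deficient.
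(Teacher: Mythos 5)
Your proposal is correct and follows essentially the same route as the paper's own proof: the exact one-step (Pythagorean) decrease, the uniform lower bound $\frac{|\res{\ik}{x\k}|^2}{\nmsq{a\ik}} \geq \frac{\nmsq{Ax\k-b}}{\Gamma_k}$ valid for every $i_k\in\Jk$ obtained by combining the definition of $\Jk$ with \eqref{xie-prof-113}, and the inductive range-space invariance $x\k-x\s\in\RA$ to invoke $\simi^2(A)$. The only difference is cosmetic ordering of the three steps, so there is nothing to change.
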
	
\begin{proof}
 According to \eqref{xie-prof-113} and the definition of $ \Jk $, for any $ i \in \Jk $ we have
\begin{equation}\label{key-obs}
\begin{aligned}
\frac{|\langle a\i, x\k\rangle- b\i|^2}{\nmsq{a\i}} \geq \frac{1}{2} \bigg( \max_{i\in[m]} \left\{ \frac{|\langle a\i, x\k\rangle- b\i|^2}{\nmsq{a\i}} \right\} +\frac{\nmsq{Ax\k-b}}{\Gamma_k} \bigg) \geq \frac{\nmsq{Ax\k-b}}{\Gamma_k},
\end{aligned}
\end{equation}
Then by the iterative strategy of Algorithm \ref{algo:basic}, we have
%\begin{equation}\label{equ:basicprof2}
%\begin{aligned}
%\nmsq{x\kp - x\s} = & \left\|x\k -  \frac{(a\ik\t x\k - b\ik)}{\nmsq{a\ik}}a\ik-x\s \right\|^2_2\\
%= &  \nmsq{x\k-x\s} - \frac{|a\ik\t x\k - b\ik|^2}{\nmsq{a\ik}} \\
%\leq &  \nmsq{x\k-x\s} - \varepsilon_k \nmsq{Ax\k-b},
%\end{aligned}
%\end{equation}
\begin{equation}\label{equ:basicprof2}
\begin{aligned}
\nmsq{x\kp - x\s} = & \left\|x\k -  \frac{\langle a\ik, x\k\rangle - b\ik}{\nmsq{a\ik}}a\ik-x\s \right\|^2_2\\
= & \nmsq{x\k-x\s} - \frac{|\langle a\ik, x\k\rangle - b\ik|^2}{\nmsq{a\ik}} \\
\leq & \nmsq{x\k-x\s} - \frac{\nmsq{Ax\k-b}}{\Gamma_k},
\end{aligned}
\end{equation}
where the second equality follows from the fact that $b\ik=\langle a\ik, x\s\rangle$. %and the last inequality follows from \eqref{Ik-xie-0624}. 
Next, we give an estimate for $\nmsq{Ax\k -b}$. First, we show that for any $k\geq0$, $ x\k - x\s \in \RA $. Indeed, from the definition of $ x\s $, we know that $ x\zo - x\s = A^\dagger(Ax\zo - b) \in \RA $. Suppose that $ x\k - x\s \in \RA $ holds, then $ x\kp - x\s = x\k -x\s -  \frac{\langle a\ik, x\k\rangle - b\ik}{\nmsq{a\ik}}a\ik \in \RA $. By induction we have that  $ x\k - x\s \in \RA $ holds for any $ k\geq 0. $ Therefore,
$$
\nmsq{Ax\k -b} = \nmsq{A(x\k-x\s)} \geq \simi^2(A) \nmsq{x\k-x\s}.
$$
Substituting it into \eqref{equ:basicprof2} completes the proof.
\end{proof}

%	By the definition of $ \Ik $ in \eqref{alg-xie-1}, we know that $ i \in \Ik $ if and only if
%\[
%\frac{|\res{\i}{x\k}|}{\nmsq{a\i}} \geq \hf \left(  \max_{i\in[m]} \left\{  \frac{|\res{\i}{x\k}|^2}{\nmsq{a\i}} \right\} + %\frac{1}{\Gamma_k}\nmsq{Ax\k-b}  \right).
%\]

We note that \eqref{key-obs} indicates that  
 the set $ \Jk $ in Algorithm \ref{algo:basic} actually consists of residuals larger than a certain threshold. 
Therefore, instead of taking the expectation of $ \frac{|\langle a\ik, x\k\rangle - b\ik|^2}{\nmsq{a\ik}} $, we employ a uniform bound of it, which makes the result deterministic. Since $\Gamma_k\leq \|A\|^2_F$, we know that \eqref{key-obs} implies \eqref{equ:termrela}. In fact, for any $k\geq 1$, we can derive a more tighter upper bound for $\Gamma_k$.

\begin{corollary}\label{main-cor}
Suppose that $ x\zo \in \Rn $ and let $ x\s = A^\dagger b + (I-A^\dagger A)x\zo $ denote the projection of $ x\zo $ onto the solution set of $ Ax=b$. Then for $ k \geq 1 $, the iteration sequence $ \seqk{x\k} $ generated by Algorithm \ref{algo:basic} satisfies
$$
\nmsq{x\k-x\s} \leq \left( 1- \frac{\simi^2(A)}{\gamma}\right) ^{k-1} \left(1-\frac{\simi^2(A)}{\nmfsq{A}}\right) \nmsq{x\zo-x\s},
$$
where $ \gamma = \max_{i\in[m]} \sum_{j=1, j\neq i}^{m} \nmsq{a_j}$.
\end{corollary}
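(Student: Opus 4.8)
The plan is to iterate the one-step contraction established in Theorem~\ref{theo:basic} and to bound the factor $\Gamma_k$ separately for $k=0$ and for $k\geq 1$. Theorem~\ref{theo:basic} supplies
$$
\nmsq{x\kp-x\s} \leq \left(1-\frac{\simi^2(A)}{\Gamma_k}\right)\nmsq{x\k-x\s},
$$
so the entire task reduces to controlling $\Gamma_k = \sum_{i\in\mathcal{N}_k}\nmsq{a_i}$ from above. For $k=0$ I would use only the trivial bound $\Gamma_0 \leq \nmfsq{A}$, which is valid since $\mathcal{N}_0\subseteq[m]$; this yields the factor $1-\simi^2(A)/\nmfsq{A}$ that appears out front in the statement, giving $\nmsq{x\ol-x\s}\leq\bigl(1-\simi^2(A)/\nmfsq{A}\bigr)\nmsq{x\zo-x\s}$.

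The key step is to sharpen this to $\Gamma_k \leq \gamma$ for every $k\geq 1$. First I would observe that the projection in Step~4 annihilates the residual at the chosen index: taking the inner product of the update with $a\ik$ gives $\inn{a\ik,x\kp}=b\ik$, hence $\res{\ik}{x\kp}=0$ and therefore $i_k\notin\mathcal{N}_{k+1}$. Shifting the index, this means $i_{k-1}\notin\mathcal{N}_k$ for all $k\geq 1$, so that $\mathcal{N}_k\subseteq[m]\setminus\{i_{k-1}\}$. Consequently,
$$
\Gamma_k = \sum_{i\in\mathcal{N}_k}\nmsq{a_i} \leq \sum_{j=1,\, j\neq i_{k-1}}^{m}\nmsq{a_j} \leq \max_{i\in[m]}\sum_{j=1,\, j\neq i}^{m}\nmsq{a_j} = \gamma,
$$
which is precisely the uniform bound needed to obtain the per-step factor $1-\simi^2(A)/\gamma$.

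Substituting these two bounds back into Theorem~\ref{theo:basic} yields $\nmsq{x\kp-x\s}\leq\bigl(1-\simi^2(A)/\gamma\bigr)\nmsq{x\k-x\s}$ for each $k\geq 1$, together with the initial estimate for $\nmsq{x\ol-x\s}$ from the $k=0$ step. I would then telescope the former recursion from index $1$ through $k$, picking up the exponent $k-1$, and attach the initial factor, which reproduces the claimed product bound exactly. The only substantive point—and the genuine content of the argument—is the exclusion $i_{k-1}\notin\mathcal{N}_k$; I do not expect a real obstacle, since the vanishing of the residual at the working row is forced by the projection together with the definition of $\mathcal{N}_k$, and the remaining recursion is routine.
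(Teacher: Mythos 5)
Your proposal is correct and follows essentially the same route as the paper's proof: both rest on the observation that the projection step forces $\langle a_{i_{k-1}}, x\k\rangle - b_{i_{k-1}} = 0$, hence $i_{k-1}\notin\mathcal{N}_k$ and $\Gamma_k\leq\gamma$ for $k\geq1$, while $\Gamma_0\leq\nmfsq{A}$, and then both iterate the contraction of Theorem \ref{theo:basic}. No gaps; this matches the paper's argument.
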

\begin{proof}
	%It is obvious that $ \nmfsq{A} \geq \Gamma_k $ for all $ k \geq 0 $.
	 For $ k \geq 1 $, we have
	\[\begin{aligned}
		r\k_{i_{k-1}} = &  \langle a_{i_{k-1}}, x\k\rangle - b_{i_{k-1}}\\
		=&  \left\langle a_{i_{k-1}}, x\km - \frac{\langle a_{i_{k-1}}, x\km\rangle - b_{i_{k-1}}}{\nmsq{a_{i_{k-1}}}}a_{i_{k-1}}\right\rangle - b_{i_{k-1}} \\
		= &  \langle a_{i_{k-1}}, x\km\rangle - b_{i_{k-1}} - \big(\langle a_{i_{k-1}}, x\km\rangle - b_{i_{k-1}}\big)\\
		=&  0,
	\end{aligned}
	\]	
	which indicates there is at least one element in $ \mathcal{N}_k $. %Thus for any $k\geq1$, $ \gamma \geq \Gamma_k$.
	Hence, by the definition of $ \Gamma_k $, we have
	%\begin{equation}\label{equ:basicprof1}
	\begin{equation}\label{gamma-est}
		\Gamma_k\leq \left\{ \begin{array}{ll}
			\nmfsq{A}, & k = 0, \\
			\gamma,  & k\geq1.
		\end{array}\right.
	\end{equation}	
	%\end{equation}	
Then by Theorem \ref{theo:basic}, we can arrive at this corollary.
\end{proof}

\begin{remark}\label{remark-xie-114-1}
	With a similar analysis $($By taking $\Gamma_k = \nmfsq{A}$$)$, we can arrive at a deterministic version for \eqref{equ:convforgrdintexp}, i.e.
	\begin{equation}\label{equ-1108-d}
	\nmsq{x\k-x\s}\leq \left( 1-\hf\left( \frac{1}{\gamma}\nmfsq{A}+1 \right) \frac{\simi^2(A)}{\nmfsq{A}} \right)^{k-1} \left(1-\frac{\simi^2(A)}{\nmfsq{A}}\right) \nmsq{x\zo-x\s}.		
	\end{equation}
	As $\gamma<\|A\|^2_F$, it holds that
	$$
	1- \frac{\simi^2(A)}{\gamma}<1-\hf\left( \frac{1}{\gamma}\nmfsq{A}+1\right).
	$$
Hence, the convergence factor of the iGRK is smaller than that of the GRK method. 
\end{remark}
\begin{remark}\label{remark-xie-p}
From the proof of Corollary \ref{main-cor}, we can observe that for $k\geq1$, $r\k_{i_{k-1}}=0$. This implies that at least $i_{k-1}$ belongs to the index set $\mathcal{N}_k$ for $k\geq1$. Therefore, in practical computations, we can take an approximate $\Gamma_k$ as
$$
\tilde{\Gamma}_k= \left\{ \begin{array}{ll}
		\nmfsq{A}, & k = 0, \\
		\nmfsq{A}-\|a_{i_{k-1}}\|^2_2,  & k\geq1.
	\end{array}\right.
$$
\end{remark}

\subsection{Iteration complexity}
%This subsection aims to show that the  deterministic convergence result for the GRK method can get better iteration complexity than that derived from the expected error $\mathbb{E}[\|x\k-x\s\|^2_2]$.
%The following lemma is useful.
%By virtue of Corollary \ref{main-cor}, we can also derive a better iteration complexity for a single run of the GRK method for obtaining an $\varepsilon$-optimal solution than the one derived from \eqref{equ:convforgrdintexp}. The following result is useful.

This subsection aims to demonstrate that the deterministic convergence result for the GRK method can achieve a better iteration complexity than the one obtained from the expected error $\mathbb{E}[\|x^{(k)}-x\s\|^2_2]$. The following lemma is essential for this purpose.

\begin{lemma}[Theorem 1, \cite{richtarik2014iteration}]\label{lamma-compx}
 Fix $x^{(0)}\in\mathbb{R}^n$ and let $\{x\k \}_{k\geq0}$ be a sequence of random vectors in $\mathbb{R}^n$ with
$x\kp$ depending on $x\k$ only. Let $\phi: \mathbb{R}^n \to \mathbb{R}$ be a nonnegative function and define
$\xi_k = \phi\left(x\k \right)$. Lastly, choose accuracy level $0 <\varepsilon  < \xi_0$, confidence level $\rho\in (0, 1)$,
and assume that the sequence of random variables $\{\xi_k \}_{k\geq0}$ is nonincreasing and  $\mathbb{E}[\xi_{k+1} | x\k] \leq \left(1-\frac{1}{c}
\right)\xi_k$, for all $k$ such that $\xi_k \geq \varepsilon$, where $c> 1$ is a constant. If we choose
$$K \geq c \log \frac{\xi_0}{\varepsilon\rho}, $$
then
$$\prob\left(\xi_K \leq \varepsilon \right) \geq 1-\rho.$$
\end{lemma}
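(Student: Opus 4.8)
The plan is to bound the tail probability $\prob(\xi_K > \varepsilon)$ and show it does not exceed $\rho$, which is equivalent to the asserted bound $\prob(\xi_K \leq \varepsilon) \geq 1-\rho$. Writing $\theta = 1 - \frac{1}{c} \in (0,1)$, the naive idea would be to iterate the one-step contraction $\E[\xi_{k+1}\mid x\k] \leq \theta\,\xi_k$ down to $\E[\xi_K] \leq \theta^K \xi_0$ and then apply Markov's inequality. The obstacle is that the contraction hypothesis is only assumed on the event $\{\xi_k \geq \varepsilon\}$, so one cannot take total expectations of it directly: on $\{\xi_k < \varepsilon\}$ there is no control on $\E[\xi_{k+1}\mid x\k]$.

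To get around this, I would work not with $\E[\xi_k]$ but with the truncated quantity $\E[\xi_k \mathbf{1}_{\{\xi_k > \varepsilon\}}]$, and here the monotonicity hypothesis becomes essential. Since $\{\xi_k\}_{k\geq 0}$ is nonincreasing, we have $\{\xi_{k+1} > \varepsilon\} \subseteq \{\xi_k > \varepsilon\}$, hence $\mathbf{1}_{\{\xi_{k+1}>\varepsilon\}} \leq \mathbf{1}_{\{\xi_k > \varepsilon\}}$. Combining this with $\xi_{k+1} \geq 0$ gives $\E[\xi_{k+1}\mathbf{1}_{\{\xi_{k+1}>\varepsilon\}}] \leq \E[\xi_{k+1}\mathbf{1}_{\{\xi_k>\varepsilon\}}]$, and since $\mathbf{1}_{\{\xi_k>\varepsilon\}}$ is a function of $x\k$, the tower property yields $\E[\xi_{k+1}\mathbf{1}_{\{\xi_k>\varepsilon\}}] = \E[\E[\xi_{k+1}\mid x\k]\,\mathbf{1}_{\{\xi_k>\varepsilon\}}]$. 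On $\{\xi_k > \varepsilon\}$ the contraction hypothesis applies (there $\xi_k \geq \varepsilon$), so $\E[\xi_{k+1}\mid x\k] \leq \theta \xi_k$, which produces the clean recursion $\E[\xi_{k+1}\mathbf{1}_{\{\xi_{k+1}>\varepsilon\}}] \leq \theta\,\E[\xi_k\mathbf{1}_{\{\xi_k>\varepsilon\}}]$.

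Iterating this recursion from $k = 0$, and using that $x\zo$ is fixed with $\xi_0 > \varepsilon$ (so the initial truncated expectation equals $\xi_0$), gives $\E[\xi_K \mathbf{1}_{\{\xi_K > \varepsilon\}}] \leq \theta^K \xi_0$. Then I would apply Markov's inequality in the form $\prob(\xi_K > \varepsilon) \leq \frac{1}{\varepsilon}\E[\xi_K\mathbf{1}_{\{\xi_K>\varepsilon\}}] \leq \frac{\theta^K \xi_0}{\varepsilon}$, where the first inequality uses that $\xi_K/\varepsilon \geq 1$ on the event $\{\xi_K > \varepsilon\}$.

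Finally, it remains to verify that the stated choice of $K$ forces this bound below $\rho$. The requirement $\frac{\theta^K\xi_0}{\varepsilon} \leq \rho$ is equivalent to $K \geq \frac{\log(\xi_0/(\varepsilon\rho))}{\log(1/\theta)}$; since $\log(1/\theta) = -\log(1 - \frac{1}{c}) \geq \frac{1}{c}$ by the elementary inequality $-\log(1-t) \geq t$ for $t\in(0,1)$, we have $\frac{1}{\log(1/\theta)} \leq c$, so the choice $K \geq c\log\frac{\xi_0}{\varepsilon\rho}$ is sufficient. This closes the argument. The only genuinely delicate point is the truncation step, and I expect the care needed there to lie in keeping the indicator $\mathbf{1}_{\{\xi_k > \varepsilon\}}$ (rather than $\mathbf{1}_{\{\xi_{k+1}>\varepsilon\}}$) before conditioning, so that it is measurable with respect to $x\k$ and the contraction hypothesis can be invoked pointwise on $\{\xi_k > \varepsilon\}$.
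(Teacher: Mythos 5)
Your proof is correct. Note that the paper itself does not prove this lemma: it is imported verbatim, with citation, as Theorem~1 of Richt\'{a}rik and Tak\'{a}\v{c} \cite{richtarik2014iteration}, so there is no in-paper argument to compare against. Your argument is essentially a reconstruction of the proof in that cited reference: there, one truncates $\xi_k$ at level $\varepsilon$ (setting $\xi_k^{\varepsilon}=\xi_k\mathbf{1}_{\{\xi_k\geq\varepsilon\}}$) and checks, case by case using monotonicity, that the truncated sequence satisfies the contraction $\mathbb{E}[\xi_{k+1}^{\varepsilon}\mid x\k]\leq(1-\tfrac{1}{c})\xi_k^{\varepsilon}$ unconditionally, after which Markov's inequality and $1-t\leq e^{-t}$ finish the argument --- exactly the same three ingredients (truncation to neutralize the restriction $\xi_k\geq\varepsilon$, measurability of the indicator with respect to $x\k$ for the tower property, and the elementary logarithm bound) that you identified and executed correctly.
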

	
From the argument in the proof of \cite[Theorem 1]{bai2018greedy}, for any $k\geq0$, we can get the following inequality for the GRK method
$$	
\begin{aligned}
	\mathbb{E}[\|x\kp-x\s\|^2_2|x\k]&\leq\max\left\{1-\hf\left( \frac{1}{\gamma}\nmfsq{A}+1\right),1-\frac{\sigma_{\min}^2(A)}{\|A\|^2_F}
	\right\}\|x\k-x\s\|^2_2\\
	&=\left(1-\frac{\sigma_{\min}^2(A)}{\|A\|^2_F}\right)\|x\k-x\s\|^2_2.
\end{aligned}
$$
Let $\xi_{k+1}=\|x\kp-x\s\|^2_2$, $0<\varepsilon<\|x^{(0)}-x\s\|^2_2$, and $\rho\in(0,1)$ be chosen arbitrarily. Then, using Lemma \ref{lamma-compx}, we can conclude that for all
\begin{equation}\label{IC-R}
	k\geq K_1:=\frac{\|A\|^2_F}{\sigma_{\min}^2(A)} \log \frac{\|x^{(0)}-x\s\|^2_2}{\varepsilon\rho},
\end{equation}
it holds
$$
\prob\left(\|x\k-x\s\|^2_2\leq\varepsilon\right)\geq 1-\rho.
$$	
This indicates that if we only have the linear convergence of the expected norm of the error, i.e. $\mathbb{E}[\|x\k-x\s\|^2_2]$, for the GRK method, it has an iterative complexity of the form  \eqref{IC-R}.

%On the other hand,  for any $k\geq0$, if we set $\Gamma_k=\|A\|^2_F$ in  Algorithm \ref{algo:basic}, then Algorithm \ref{algo:basic}  recovers the GRK method.

On the other hand, from \eqref{equ-1108-d}, we know that the iteration sequence of the GRK method satisfies
$$
\begin{aligned}
\|x\k-x\s\|^2_2\leq\left(1-\frac{\sigma_{\min}^2(A)}{\|A\|^2_F}\right)^k\|x^{(0)}-x\s\|^2_2.
\end{aligned}
$$
Since  $1-t<e^{-t}$ for any $t\in(0,1)$, we can simplify the inequality as 
	$$
	\|x\k-x\s\|^2_2\leq e^{-k\frac{\sigma_{\min}^2(A)}{\|A\|^2_F}}\|x^{(0)}-x\s\|^2_2.
	$$
	Hence,
	 for all 
	\begin{equation}\label{IC-D}
		k\geq K_2:=\frac{\|A\|^2_F}{\sigma_{\min}^2(A)} \log \frac{\|x^{(0)}-x\s\|^2_2}{\varepsilon},
	\end{equation}
	it holds
	$$
	\|x\k-x\s\|^2_2\leq\varepsilon.
	$$	
	This means that our deterministic linear convergence result ensures the GRK method has an iterative complexity of the form \eqref{IC-D}. It is obvious that the iteration complexity \eqref{IC-D} is better than \eqref{IC-R}.
	
\section{Momentum acceleration}

%In this section, the Polyak¡¯s heavy ball momentum technique is incorporated to improve the performance
%of the GRK method

In this section, we will incorporate Polyak's heavy ball momentum into the iGRK method and show that the momentum variant of the GRK method also achieves deterministic linear convergence.

\subsection{Heavy ball momentum}
Recall that the gradient descent (GD) method for solving the optimization problem
$
\min\limits_{x\in\mathbb{R}^n} f(x)
$ utilizes the update
$$
x\kp=x\k-\alpha_{k} \nabla f\big(x\k\big),
$$
where $\alpha_{k}>0$ is the step-size, $f$ is a differentiable convex function, and $\nabla f\left(x\k\right)$ denotes the gradient of $f$ at $x\k$.
When $f$ is a convex function with $L$-Lipschitz gradient, GD requires $O(L / \varepsilon)$ steps to guarantee an error within $\varepsilon$.  If $f$ is also $\mu$-strongly convex, it converges linearly with a iteration complexity  of $O(\log (\varepsilon^{-1})(L / \mu))$ \cite{nesterov2003introductory}.
To improve the convergence behavior of the method, Polyak modified GD by introducing a momentum term, $\beta_k(x\k-x\km)$. This leads to the gradient descent method with momentum (mGD), commonly known as the heavy ball method
$$
x\kp=x\k-\alpha_{k} \nabla f\big(x\k\big)+\beta_k\big(x\k-x\km\big) .
$$
Polyak \cite{polyak1964some} proved that, for twice continuously differentiable objective functions $f(x)$ with $\mu$-strongly convex and $L$-Lipschitz gradient, mGD achieves a local accelerated iteration complexity  of  $O\left(\log (\varepsilon^{-1})\sqrt{L / \mu}\right)$ (with an appropriate choice of the step-size $\alpha_{k}$ and momentum parameter $\beta_k$). In this section, we aim to use the heavy ball momentum technique to  improve the performance of the GRK method.
%The algorithm mainly consists of two main components, the iterative format and the row-selection strategy.
	
\subsection{The proposed method}
The proposed mGRK method for solving linear systems utilizes the following update rule:
%We propose the mGRK method for solving linear systems, given by the following update rule:
\begin{equation}\label{equ:algoiter}
x\kp = x\k - \al \frac{\langle a\ik, x\k\rangle - b\ik}{\nmsq{a\ik}} a\ik +\be(x\k-x\km),
\end{equation}
where the index $i_k$ is selected using a certain greedy probability criterion.
We note that Bai and Wu \cite{bai2018relaxed} introduced a relaxation parameter $\theta$  in the GRK method such that the factor $\frac{1}{2}$ before the terms $ \max_{i\in[m]} \left\{ \frac{|\langle a_i, x\k\rangle- b\i|^2}{\nmsq{a\i}} \right\} $ and $ \frac{\nmsq{Ax\k-b}}{\|A\|^2_F} $ is replaced by $\theta$ in the first term and by $1-\theta$ in the second term, proposing the relaxed greedy probability criterion. The  mGRK method will adopt this relaxed greedy probability criterion and  is described in Algorithm \ref{algo:main}.

\begin{algorithm}[htpb]
\caption{Improved greedy randomized Kaczmarz method with momentum (mGRK) \label{algo:main}}
\begin{algorithmic}
\Require
 $A\in \mathbb{R}^{m\times n}$, $b\in \mathbb{R}^m$, $\alpha>0$, $\beta\geq0$, $ \theta \in [0,1] $, $k=1$ and initial points $x\zo=x\ol \in\mathbb{R}^n$.
\begin{enumerate}
%\item[1:] Compute
%$$
%\tau_k = \theta\cdot \frac{1}{\nmsq{Ax\k-b}} \max_{i} \left\{ \frac{|a\i\t x\k- b\i|^2}{\nmsq{a\i}} \right\} + (1-\theta)\cdot \frac{1}{\nmfsq{A}}.
%$$
\item[1:] Determine the set
\[
\mathcal{N}_k = \{ i\in[m]: |\inn{a_i,x\k}-b_i|\neq0 \}
\]
and compute \[
	\Gamma_k = \sum_{i\in\mathcal{N}_k} \nmsq{a_i}.
\]
%\item[2:] Determine the index set of positive integers
%$$
%			\Ik = \left\{  i : |a\i\t x\k- b\i|^2 \geq \tau_k \nmsq{Ax\k-b}\nmsq{a\i} \right\}.
%$$
\item[2:] Determine the index set of positive integers
\begin{equation}\label{equ:algo2Ik}
	\Sk = \left\{  i : \frac{|\langle a_i, x\k\rangle- b\i|^2}{\nmsq{a\i}}  \geq \theta  \max_{i\in[m]} \left\{ \frac{|\langle a_i, x\k\rangle- b\i|^2}{\nmsq{a\i}} \right\} +  (1 - \theta) \frac{\nmsq{Ax\k-b}}{\Gamma_k}    \right\}.
\end{equation}
\item[3:] Select $ i_k \in  \Sk $ according to some probability
$$
			\prob(i_k = i) = p\k_i,
$$			
		where $ p\k_i = 0 \operatorname{if} i \notin \Sk $, $ p\k_i \geq 0 \operatorname{if} i \in \Sk $, and $ \sum_{i\in\Sk} p\k_i=1 $.
\item[4:] Set
$$
x\kp = x\k - \al \frac{\langle a\ik, x\k\rangle- b\ik}{\nmsq{a\ik}}a\ik+\be(x\k-x\km).
$$
\item[5:] If the stopping rule is satisfied, stop and go to output. Otherwise, set $k=k+1$ and return to Step $1$.
\end{enumerate}

\Ensure
  The approximate solution $x^k$.
\end{algorithmic}
\end{algorithm}

\subsection{Convergence analysis}
To establish the linear convergence of mGRK, the following lemma is instrumental.
	\begin{lemma}[\cite{han2022pseudoinverse}, Lemma 8.1]
\label{lemma:mom}
Fix $F^{(1)}=F^{(0)}\geq 0$ and let $\{F^{(k)}\}_{k\geq 0}$ be a sequence of nonnegative real numbers satisfying the relation
$$
F^{(k+1)}\leq \gamma_1 F^{(k)}+\gamma_2F^{(k-1)},\ \ \forall \ k\geq 1,
$$
where $\gamma_2\geq0,\gamma_1+\gamma_2<1$. Then the sequence satisfies the relation
$$F^{(k+1)}\leq q^k(1+\delta)F^{(0)},\ \ \forall \ k\geq 0,$$
where $$q=\left\{
           \begin{array}{ll}
             \frac{\gamma_1+\sqrt{\gamma_1^2+4\gamma_2}}{2}, & \hbox{if $\gamma_2>0$;} \\
             \gamma_1, & \hbox{if $\gamma_2=0$,}
           \end{array}
         \right. \  \mbox{and} \ \delta=q-\gamma_1\geq 0.$$ Moreover,
$
\gamma_1+\gamma_2\leq q<1,
$
with equality if and only if $\gamma_2=0$.
	\end{lemma}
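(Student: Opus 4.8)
The plan is to treat $q$ as the positive root of the characteristic polynomial $g(t) := t^2 - \gamma_1 t - \gamma_2$ attached to the recurrence, so that the defining identity
$$
q^2 = \gamma_1 q + \gamma_2
$$
holds in both branches of the definition of $q$ (when $\gamma_2 = 0$ one has $q = \gamma_1$ and the identity degenerates to $\gamma_1^2 = \gamma_1^2$). This single identity is the engine of the whole argument: it is exactly what lets the two-term recurrence collapse under induction. First I would record the elementary facts about $q$ and $\delta$ asserted in the statement, and then establish the main geometric decay bound by strong induction on $k$.

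To obtain the auxiliary inequalities I would analyze the sign of the upward parabola $g$. Since $\gamma_2 \geq 0$, its smaller root is nonpositive, so $g(t) \leq 0$ holds precisely on the interval between the two roots, whose right endpoint is $q$. The bound $\delta = q - \gamma_1 \geq 0$ is immediate from $\sqrt{\gamma_1^2 + 4\gamma_2} \geq |\gamma_1| \geq \gamma_1$. For $q < 1$ I would evaluate $g(1) = 1 - \gamma_1 - \gamma_2 > 0$, which places $1$ strictly to the right of the larger root $q$. For $\gamma_1 + \gamma_2 \leq q$, the key computation is
$$
g(\gamma_1 + \gamma_2) = \gamma_2(\gamma_1 + \gamma_2 - 1) \leq 0,
$$
which is nonpositive because $\gamma_1 + \gamma_2 < 1$ and $\gamma_2 \geq 0$; this forces $\gamma_1 + \gamma_2$ into the interval $[\,\text{smaller root},\, q\,]$, and the equality $\gamma_1 + \gamma_2 = q$ occurs exactly when $\gamma_2 = 0$.

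For the main estimate I would prove $P(k): F^{(k+1)} \leq q^k(1+\delta)F^{(0)}$ by strong induction. The base case $P(0)$ reads $F^{(1)} \leq (1+\delta)F^{(0)}$, which holds since $F^{(1)} = F^{(0)}$ and $\delta \geq 0$. For $P(1)$ I would combine $F^{(2)} \leq (\gamma_1 + \gamma_2)F^{(0)}$ with the simplification $q(1+\delta) = q + q^2 - \gamma_1 q = q + \gamma_2$, reducing the claim to $\gamma_1 \leq q$, i.e. to $\delta \geq 0$. In the inductive step, assuming $P(k-1)$ and $P(k)$, the recurrence gives
$$
F^{(k+2)} \leq \gamma_1 F^{(k+1)} + \gamma_2 F^{(k)} \leq q^{k-1}(1+\delta)F^{(0)}\,(\gamma_1 q + \gamma_2),
$$
and substituting $\gamma_1 q + \gamma_2 = q^2$ yields precisely $q^{k+1}(1+\delta)F^{(0)}$, which is $P(k+1)$. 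I do not anticipate a genuine obstacle, since the argument is elementary; the only points demanding care are keeping the degenerate $\gamma_2 = 0$ case (where $\delta = 0$ and the bound reduces to the one-step estimate $F^{(k+1)} \leq \gamma_1 F^{(k)}$) consistent with the $\gamma_2 > 0$ case, and supplying the two base cases $P(0)$ and $P(1)$ that a second-order recurrence requires before the induction can run.
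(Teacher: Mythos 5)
The paper never proves this lemma: it is imported verbatim from \cite{han2022pseudoinverse} (Lemma 8.1), so there is no internal proof to compare against, and your induction is in fact the standard argument for results of this type (it follows the proof pattern used in \cite{loizou2020momentum}). Your analysis of the characteristic polynomial $g(t)=t^2-\gamma_1 t-\gamma_2$ is sound: the identity $q^2=\gamma_1 q+\gamma_2$ in both branches, the bound $\delta=q-\gamma_1\geq 0$, the deduction of $q<1$ from $g(1)=1-\gamma_1-\gamma_2>0$ together with the nonpositivity of the smaller root, and the computation $g(\gamma_1+\gamma_2)=\gamma_2(\gamma_1+\gamma_2-1)\leq 0$ giving $\gamma_1+\gamma_2\leq q$ with equality exactly when $\gamma_2=0$ are all correct, as are your base cases $P(0)$ and $P(1)$.

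There is, however, a genuine gap in the inductive step, and it is a sign issue. The stated hypotheses are only $\gamma_2\geq0$ and $\gamma_1+\gamma_2<1$; nothing forces $\gamma_1\geq0$. Your step
$$
F^{(k+2)}\leq \gamma_1 F^{(k+1)}+\gamma_2 F^{(k)}\leq q^{k-1}(1+\delta)F^{(0)}\left(\gamma_1 q+\gamma_2\right)
$$
multiplies the induction hypothesis $F^{(k+1)}\leq q^{k}(1+\delta)F^{(0)}$ by $\gamma_1$, which reverses the inequality if $\gamma_1<0$; your degenerate-case remark (iterating $F^{(k+1)}\leq\gamma_1 F^{(k)}$ when $\gamma_2=0$) needs $\gamma_1\geq 0$ for the same reason, and when $\gamma_2=0$, $\gamma_1<0$ the value $q=\gamma_1$ is not the larger root of $g$, so the parabola picture also needs a caveat there. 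Two ways to close the gap: (i) add $\gamma_1\geq0$ as an explicit hypothesis, which is harmless for this paper since in the application (Theorem \ref{theo:main}) one always has $\gamma_1\geq 0$ --- indeed the authors check $q=\gamma_1\geq0$ when $\gamma_2=0$; or (ii) avoid multiplying inequalities by $\gamma_1$ altogether by introducing the weighted sequence $V^{(k)}:=F^{(k)}+\delta F^{(k-1)}$. Adding $\delta F^{(k)}\geq0$ to the recurrence and using the exact identities $\gamma_1+\delta=q$ and $q\delta=\gamma_2$ gives
$$
V^{(k+1)}\leq (\gamma_1+\delta)F^{(k)}+\gamma_2F^{(k-1)}=qF^{(k)}+q\delta F^{(k-1)}=qV^{(k)},
$$
with no assumption on the sign of $\gamma_1$; since $q>0$ whenever $\gamma_2>0$, this telescopes to $F^{(k+1)}\leq V^{(k+1)}\leq q^{k}V^{(1)}=q^{k}(1+\delta)F^{(0)}$, and the one remaining case $\gamma_2=0$, $\gamma_1<0$ forces $F^{(k)}\equiv0$ (nonnegativity plus $F^{(2)}\leq\gamma_1F^{(1)}$ gives $F^{(1)}=F^{(0)}=0$), so the bound holds trivially. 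With either repair your argument becomes a complete proof of the lemma as stated.
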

	
We have the following convergence result for Algorithm \ref{algo:main}.
	\begin{theorem}\label{theo:main}
Suppose that $ x\zo=x\ol \in \Rn $, $ \theta \in [0,1] $ and let $ x\s = A^\dagger b + (I-A^\dagger A)x\zo $ denote  the projection of $ x\zo $ onto the solution set of $ Ax=b. $
Assume that $ \al \in (0,2) $ if $\beta=0$ or $\al\in(0,1+\beta)$ if $ \be >0 $, and  the expressions
$$\gamma_1 = 2\be^2 + 3\be+ 1 - (3\al\be + 2\al - \al^2)\frac{\simi^2(A)}{\nmfsq{A}} \quad  \mathrm{and} \quad \gamma_2 = 2\be^2 + \be$$
satisfy $\gamma_1+\gamma_2<1$. Then the iteration sequence $ \{x^{(k)}\}_{k\geq 0} $ generated by Algorithm \ref{algo:main} satisfies
$$
\nmsq{x\kp-x\s} \leq q^k(1+\delta)\nmsq{x\zo-x\s},
$$
where $ q = \frac{\gamma_1+\sqrt{\gamma_1^2+4\gamma_2}}{2} $ and $ \delta = q -\gamma_1. $ Moreover, $ \gamma_1+\gamma_2\leq q <1 $.
\end{theorem}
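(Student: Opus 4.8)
The plan is to reduce the statement to the scalar two-term recursion governed by Lemma~\ref{lemma:mom}. Setting $F^{(k)} := \nmsq{x\k-x\s}$, it suffices to establish the \emph{deterministic} inequality
\[
F^{(k+1)} \le \gamma_1 F^{(k)} + \gamma_2 F^{(k-1)}, \qquad k\ge 1,
\]
with the $\gamma_1,\gamma_2$ stated in the theorem. The initialization $x\zo=x\ol$ gives $F^{(1)}=F^{(0)}$, the expression for $\gamma_2=2\be^2+\be$ is manifestly nonnegative, and $\gamma_1+\gamma_2<1$ is assumed; these are exactly the hypotheses of Lemma~\ref{lemma:mom}, whose conclusion is the asserted bound $F^{(k+1)}\le q^k(1+\delta)F^{(0)}$ together with $\gamma_1+\gamma_2\le q<1$. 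Thus the whole argument rests on producing that recursion.

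To derive it, I would write $e_k := x\k-x\s$ and introduce the orthogonal projector $P_k := \frac{a\ik a\ik\t}{\nmsq{a\ik}}$ onto $\operatorname{span}(a\ik)$. Using $b\ik=\inn{a\ik,x\s}$, the update of Algorithm~\ref{algo:main} reads $e_{k+1}=e_k-\al P_k e_k+\be(e_k-e_{k-1})$. Expanding $\nmsq{e_{k+1}}$ and exploiting that $P_k$ is symmetric and idempotent (so $\inn{e_k,P_ke_k}=\nmsq{P_ke_k}=\frac{|\inn{a\ik,x\k}-b\ik|^2}{\nmsq{a\ik}}=:\rho_k$) yields the ``clean'' Kaczmarz term $\nmsq{e_k}-(2\al-\al^2)\rho_k$, the momentum term $\be^2\nmsq{e_k-e_{k-1}}$, and the two cross terms $-2\be(1+\be)\inn{e_k,e_{k-1}}$ and $2\al\be\inn{P_ke_k,e_{k-1}}$. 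Two ingredients then turn this exact identity into the recursion. The first is the deterministic greedy bound: exactly as in \eqref{key-obs}, any $i_k\in\Sk$ from \eqref{equ:algo2Ik} satisfies, via \eqref{xie-prof-113} and $\theta\in[0,1]$, the chain $\rho_k \ge \theta\max_{i}\{\cdots\}+(1-\theta)\tfrac{\nmsq{Ax\k-b}}{\Gamma_k}\ge \tfrac{\nmsq{Ax\k-b}}{\Gamma_k}\ge\tfrac{\nmsq{Ax\k-b}}{\nmfsq{A}}$, using $\Gamma_k\le\nmfsq{A}$; and since the inductive argument of Theorem~\ref{theo:basic} carries over verbatim to the momentum update (because $a\ik\in\RA$ and $x\zo=x\ol$ force $e_k\in\RA$), we get $\nmsq{Ax\k-b}=\nmsq{Ae_k}\ge\simi^2(A)\nmsq{e_k}$, hence $\rho_k\ge\frac{\simi^2(A)}{\nmfsq{A}}\nmsq{e_k}$. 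This is precisely the step that replaces the conditional expectation of \cite{loizou2020momentum} and renders the bound deterministic; it may be used because the range of $\al$ (namely $\al\in(0,2)$ when $\be=0$ and $\al\in(0,1+\be)$ otherwise) keeps the coefficient $3\al\be+2\al-\al^2=\al(3\be+2-\al)$ nonnegative, so lower-bounding $\rho_k$ indeed produces an \emph{upper} bound on $F^{(k+1)}$.

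The second ingredient, and the main obstacle, is the tight treatment of the cross terms. Bounding them by Cauchy--Schwarz/Young together with the elementary $\nmsq{e_k-e_{k-1}}\le 2\nmsq{e_k}+2\nmsq{e_{k-1}}$ must be done so that everything collapses into coefficients of $\nmsq{e_k}$ and $\nmsq{e_{k-1}}$ equal to $\gamma_1$ and $\gamma_2$. The delicate point is the term $2\al\be\inn{P_ke_k,e_{k-1}}$ (coupled with $\inn{e_k,e_{k-1}}$): a naive symmetric split leaves a spurious $\al\be$ in both the $\nmsq{e_{k-1}}$ coefficient and the residual coefficient, which would degrade the rate. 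Achieving the sharp constants $\gamma_1=2\be^2+3\be+1-(3\al\be+2\al-\al^2)\frac{\simi^2(A)}{\nmfsq{A}}$ and $\gamma_2=2\be^2+\be$ --- the refinement over \cite{loizou2020momentum} advertised in the introduction --- requires the right grouping (for instance treating $(1+\be)e_k-\al P_ke_k$ as a single block against $\be e_{k-1}$) and the right Young weights, so that the $e_{k-1}$ contamination is steered into the residual reduction rather than into $\gamma_2$.

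Once the recursion holds with these $\gamma_1,\gamma_2$, I would close the argument exactly as outlined in the first paragraph: verify $\gamma_2\ge 0$, invoke the hypothesis $\gamma_1+\gamma_2<1$, and apply Lemma~\ref{lemma:mom} to obtain $\nmsq{x\kp-x\s}\le q^k(1+\delta)\nmsq{x\zo-x\s}$ with $q=\frac{\gamma_1+\sqrt{\gamma_1^2+4\gamma_2}}{2}$, $\delta=q-\gamma_1$, and $\gamma_1+\gamma_2\le q<1$.
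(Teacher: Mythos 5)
Your scaffolding matches the paper's proof exactly: reduce to a two-term recursion $F^{(k+1)}\le\gamma_1F^{(k)}+\gamma_2F^{(k-1)}$, feed it to Lemma~\ref{lemma:mom}, and obtain the deterministic bound $\rho_k\ge\frac{\simi^2(A)}{\nmfsq{A}}\nmsq{x\k-x\s}$ from the greedy criterion \eqref{equ:algo2Ik} via \eqref{xie-prof-113}, $\Gamma_k\le\nmfsq{A}$, and the induction showing $x\k-x\s\in\RA$ (your observation that this lower bound is usable because $3\al\be+2\al-\al^2>0$ on the stated range of $\al$ is also correct). But the heart of the theorem --- deriving the recursion with precisely $\gamma_1=2\be^2+3\be+1-(3\al\be+2\al-\al^2)\frac{\simi^2(A)}{\nmfsq{A}}$ and $\gamma_2=2\be^2+\be$ --- is the step you yourself label ``the main obstacle'' and then do not carry out; you only assert that some ``right grouping and right Young weights'' exist. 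That is a genuine gap. Moreover, the expansion you do write down is not an identity: with $e_k=x\k-x\s$, the terms $\nmsq{e_k}-(2\al-\al^2)\rho_k$ and $\be^2\nmsq{e_k-e_{k-1}}$ come from the grouping $e_{k+1}=(I-\al P_k)e_k+\be(e_k-e_{k-1})$, whose cross terms are $2\be\inn{e_k,e_k-e_{k-1}}$ and $-2\al\be\inn{P_ke_k,e_k-e_{k-1}}$; the cross terms you list, $-2\be(1+\be)\inn{e_k,e_{k-1}}$ and $2\al\be\inn{P_ke_k,e_{k-1}}$, belong instead to the grouping $e_{k+1}=(1+\be)e_k-\al P_ke_k-\be e_{k-1}$, whose quadratic terms are different. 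Your claimed expansion differs from $\nmsq{e_{k+1}}$ by $2\be\nmsq{e_k}-2\al\be\rho_k+2\be^2\inn{e_k,e_{k-1}}$, which is not zero in general.

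For the record, no Young-type weighting is needed to close the argument, and this is the idea your proposal is missing. The paper handles both cross terms by \emph{exact} polarization identities: $2\be\inn{e_k,e_k-e_{k-1}}=\be\bigl(\nmsq{e_k}+\nmsq{e_k-e_{k-1}}-\nmsq{e_{k-1}}\bigr)$ and $-2\al\be\inn{P_ke_k,e_k-e_{k-1}}=\al\be\bigl(\nmsq{(I-P_k)e_k-e_{k-1}}-\nmsq{e_k-e_{k-1}}-\rho_k\bigr)$. After that, only two inequalities are used: $\nmsq{a+b}\le2\nmsq{a}+2\nmsq{b}$ applied to $(I-P_k)e_k-e_{k-1}$, combined with the projector identity $\nmsq{(I-P_k)e_k}=\nmsq{e_k}-\rho_k$ (this is exactly where the coefficient $3\al\be$ in front of $\rho_k$ arises: $\al\be$ from the polarization plus $2\al\be$ from the Pythagoras substitution, added to $2\al-\al^2$); and then, only because $\al\le1+\be$ makes the resulting coefficient $\be^2+\be-\al\be$ of $\nmsq{e_k-e_{k-1}}$ nonnegative, the absorption $\nmsq{e_k-e_{k-1}}\le2\nmsq{e_k}+2\nmsq{e_{k-1}}$, which produces $\gamma_1$ and $\gamma_2$ exactly. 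Until you execute a computation of this kind (or verify that your proposed weighted split actually lands on the stated constants), the proof is incomplete at its central step.
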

	\begin{proof}
To state conveniently, we set $ P\ik := \frac{a\ik a\ik\t}{\nmsq{a\ik}} $, then we have
\begin{equation}\label{equ:deft}
\frac{\langle a\ik, x\k\rangle - b\ik}{\nmsq{a\ik}}a\ik = \frac{a\ik a\ik\t}{\nmsq{a\ik}} (x\k-x\s) = P\ik(x\k -x\s),
\end{equation}
where the first equality follows from the fact $ a\i\t x\s = b\i $ for any $ i\in \setm $.
		Noting that $  P\ik \t = P\ik $ and $ P\ik^2 = \frac{a\ik a\ik\t a\ik a\ik\t}{\nm{a\ik}^4} = \frac{a\ik a\ik\t}{\nmsq{a\ik}} = P\ik $, we have
\begin{equation}\label{equ:t2=t}
\inn{x\k-x\s, P\ik(x\k-x\s)} = \inn{x\k-x\s, P\ik^2(x\k-x\s)} = \nmsq{P\ik(x\k-x\s)}.
\end{equation}
Now by the  iterative strategy of Algorithm \ref{algo:main}, we have
\begin{equation}\label{equ:mainframe}
\begin{aligned}
\nmsq{x\kp - x\s} = & \ \nmsq{x\k - x\s - \al P\ik(x\k - x\s) + \be(x\k-x\km)} \\
= & \ \nmsq{(I-\al P\ik)(x\k - x\s)} + \be^2 \nmsq{x\k-x\km} \\
& + 2\be \inn{x\k - x\s, x\k-x\km} -2\al\be\inn{P\ik(x\k - x\s),x\k-x\km}.
\end{aligned}
\end{equation}
We shall analyze the four terms in the last expression  separately. By using \eqref{equ:t2=t}, the first term satisfies
		\[\begin{aligned}
			\nmsq{(I-\al P\ik)(x\k - x\s)} = & \ \inn{(x\k - x\s),(I-2\al P\ik+\al^2P\ik^2)(x\k - x\s)} \\
			= & \ \nmsq{x\k - x\s} - (2\al-\al^2) \nmsq{P\ik(x\k-x\s)}.
		\end{aligned}\]
		We keep the second term $ \be^2 \nmsq{x\k-x\km} $ unchanged and reformulate the third term by
		\[ \begin{aligned}
			2\be \inn{x\k - x\s, x\k-x\km} = \be (\nmsq{x\k - x\s} + \nmsq{x\k-x\km} - \nmsq{x\km - x\s} )
		\end{aligned} \]
For the last term,
		\[ \begin{aligned}
			& -2\al\be\inn{P\ik(x\k - x\s),x\k-x\km} \\
			= & \ \al\be( \nmsq{ x\k-x\km - P\ik(x\k - x\s)} - \nmsq{x\k-x\km} - \nmsq{P\ik(x\k - x\s)} ) \\
			= & \ \al\be ( \nmsq{ (I-P\ik)(x\k-x\s) - (x\km-x\s) } - \nmsq{x\k-x\km} - \nmsq{P\ik(x\k - x\s)} ) \\
			\leq & \  \al\be ( 2\nmsq{ (I-P\ik)(x\k-x\s)} + 2\nmsq{ x\km-x\s } - \nmsq{x\k-x\km} - \nmsq{P\ik(x\k - x\s)} ) \\
			= & \ \al\be (2\nmsq{x\k-x\s} - 3\nmsq{P\ik(x\k-x\s)} + 2\nmsq{ x\km-x\s } - \nmsq{x\k-x\km}),
		\end{aligned} \]
where the inequality follows from $ \nmsq{a+b} \leq 2\nmsq{a} + 2\nmsq{b} $. Overall, subsituting the above bounds into \eqref{equ:mainframe}, we obtain
		\[\begin{aligned}
			\nmsq{x\kp - x\s} \leq & \ (2\al\be+\be+1) \nmsq{x\k-x\s} + (2\al\be-\be)\nmsq{x\km-x\s} \\
			&  +(\be^2 + \be - \al\be) \nmsq{x\k-x\km} - (3\al\be + 2\al - \al^2)\nmsq{P\ik(x\k -x\s)}.
		\end{aligned}\]
Since $\be^2 + \be - \al\be\geq0$ by the assumption in this theorem, we eliminate the term $ \nmsq{x\k-x\km} $ by $ \nmsq{x\k-x\km} \leq 2\nmsq{x\k-x\s} + 2\nmsq{x\km-x\s}$ and have
		\begin{equation}\label{equ:mainframe2}
			\begin{aligned}
				\nmsq{x\kp - x\s} \leq  & \ (2\be^2 + 3\be+ 1) \nmsq{x\k - x\s} + (2\be^2 + \be) \nmsq{x\km - x\s} \\
				& - (3\al\be + 2\al - \al^2)\nmsq{P\ik(x\k -x\s)}.
			\end{aligned}
		\end{equation}
Now we focus on the last term $ \nmsq{P\ik(x\k -x\s)} $ and establish its relationship with $ \nmsq{x\k-x\s}. $ It follows from \eqref{equ:algo2Ik}, we know that %the inequality \eqref{equ:termrela} also holds for Algorithm \ref{algo:main}, i.e.
\[
\begin{aligned}
	 \nmsq{P\ik(x\k -x\s)} =& \frac{|\langle a\ik, x\k\rangle - b\ik|^2}{\nmsq{a\ik}} \\
	 \geq & \theta \max_{i\in[m]} \left\{ \frac{|a\i\t x\k- b\i|^2}{\nmsq{a\i}} \right\} + (1-\theta) \frac{\nmsq{Ax\k-b}}{\Gamma_k} \\
	 \geq & \frac{\nmsq{Ax\k-b}}{\Gamma_k},
\end{aligned}
\]
where the equality follows from \eqref{equ:deft}.
We now prove that $ x\k - x\s \in \operatorname{Range}(A^\top) $ for all $ k\geq 0 $ by induction. By the definition of $x\zo, x\ol$, and $x\s$, we have $ x\zo - x\s,x\ol - x\s\in \operatorname{Range}(A^\top) $. If $ x^\ell - x\s\in \operatorname{Range}(A^\top) $ holds for $\ell=0,\ldots,k$, then
$$ 
\begin{aligned}x\kp- x\s  &= x\k- x\s  - \al \frac{\langle a\ik, x\k\rangle - b\ik}{\nmsq{a\ik}}a\ik+\be(x\k-x\km)\\
 & =(1+\beta)(x\k- x\s) -\beta(x\km -x\s) - \al \frac{\langle a\ik, x\k\rangle - b\ik}{\nmsq{a\ik}}a\ik \in\operatorname{Range}(A^\top).
\end{aligned}$$ 
Hence, by induction we have that $ x\k - x\s \in \operatorname{Range}(A^\top) $ for all $ k\geq 0 $. Hence, we have
		\[\nmsq{P\ik(x\k -x\s)} \geq \frac{\nmsq{Ax\k-b}}{\Gamma_k}\geq \frac{\simi^2(A)}{\Gamma_k}\nmsq{x\k-x\s}\geq \frac{\simi^2(A)}{\nmfsq{A}}\nmsq{x\k-x\s}. \]
Substituting it into \eqref{equ:mainframe2}, we can get
$$
\begin{aligned}
\nmsq{x\kp - x\s} \leq  & \left(2\be^2 + 3\be+ 1 - (3\al\be + 2\al - \al^2)\frac{\simi^2(A)}{\nmfsq{A}} \right) \nmsq{x\k - x\s}
\\&+ (2\be^2 + \be) \nmsq{x\km - x\s}.
\end{aligned}
$$
%We specifically point out that we here skip the step of taking expectation and use the deterministic bound.
		%by the greedy row selection rule \eqref{equ:algoIk}, for any distribution \eqref{equ:algoprob}, we achieve a deterministic recursive relationship of $ \nmsq{x\k-x\s} $ WITHOUT taking the expectation over $ i_k $.
Suppose that $ F^{(k)}:= \nmsq{x\k-x\s} $,
$\gamma_1 = 2\be^2 + 3\be+ 1 - (3\al\be + 2\al - \al^2)\frac{\simi^2(A)}{\nmfsq{A}}$, and $ \gamma_2  = 2\be^2 + \be$. Noting that the conditions of the Lemma \ref{lemma:mom} are satisfied. Indeed, $\gamma_2\geq0$, and if $\gamma_2=0$, then $\beta=0$ and $q=\gamma_1\geq0$.
The condition $\gamma_1+\gamma_2<1$ holds by assumption.
Then by Lemma \ref{lemma:mom}, one can get the theorem.
 %$ a_2 = 2\be^2 + \be \geq 0.$ If $ a_2=0 $ then $ \be =0 $ and hence $ a_1 = 1-(2\al-\al^2) .$ Since the conditions of the lemma \ref{lemma:mom} are all satisfied, we achieve the theorem by applying the lemma.
		%		Moreover, since $ i_k $ is select from $ \Ik $, by \eqref{equ:algotauk} and \eqref{equ:algoIk},
		%		\[\begin{aligned}
			%			\frac{|a\ik\t x\k- b\ik|^2}{\nmsq{a\ik}} \geq \tau_k \nmsq{Ax\k-b}
			%			= \theta \cdot \max_{i} \left\{ \frac{|a\i\t x\k- b\i|^2}{\nmsq{a\i}} \right\} + (1-\theta) \cdot  \frac{\nmsq{Ax\k-b}}{\nmfsq{A}}.
			%		\end{aligned}\]
	\end{proof}
	
\begin{remark}\label{remark-xie-0624}
When $ \be = 0 $, the conclusion in Theorem \ref{theo:main} reduces to
\[
\nmsq{x\k-x\s} \leq \bigg( 1-(2\al-\al^2)\frac{\simi^2(A)}{\nmfsq{A}} \bigg)^k \nmsq{x\zo-x\s},
\]
which is the convergence rate for the GRK method with relaxation. It can be observed that when $ \be = 0 $ and $ \al = 1 $, the bound established in Theorem \ref{theo:basic} is tighter than the one obtained here.
%When $ \be = 0 $, the conclusion in Theorem \ref{theo:main} will reduce to
%\[
%\nmsq{x\k-x\s} \leq \bigg( 1-(2\al-\al^2)\frac{\simi^2(A)}{\nmfsq{A}} \bigg)^k \nmsq{x\zo-x\s},
%\]
%which is the convergence rate for the GRK method with relaxation. It can be observe that when $ \be = 0 $ and $ \al = 1 $, the bound established in Theorem \ref{theo:basic} is tighter than the one obtained here. %But the tighter bound does not work for $ \al \neq 0. $
	\end{remark}
	
\begin{remark}
In \cite[Theorem 1]{loizou2020momentum}, the authors provided a linear convergence result for the momentum variant of the RK method of the same form as ours, where
\[ \tilde{\gamma}_1 = 2\be^2 + 3\be+ 1 - (\al\be + 2\al - \al^2)\frac{\simi^2(A)}{\nmfsq{A}} \quad and \quad \tilde{\gamma}_2  = 2\be^2 + \be + \al\be\frac{\sima^2(A)}{\nmfsq{A}} . \]
These constants are larger than those obtained by Theorem \ref{theo:main}, and hence, a smaller convergence factor can be guaranteed for the mGRK method.
%
%In \cite[Theorem 1]{loizou2020momentum}, the authors provided a linear convergence for the momentum variant of the RK method of the same form where
%\[ \tilde{\gamma}_1 = 2\be^2 + 3\be+ 1 - (\al\be + 2\al - \al^2)\frac{\simi^2(A)}{\nmfsq{A}} \quad and \quad \tilde{\gamma}_2  = 2\be^2 + \be + \al\be\frac{\sima^2(A)}{\nmfsq{A}} . \]
%Obviously, they are larger than that obtained by Theorem \ref{theo:main}, and hence, a smaller convergence factor can be guaranteed  for mGRK.
\end{remark}

\begin{remark}\label{remark-xie-114}
	From the proof of Theorem \ref{theo:main}, it is evident that one can set $\Gamma_k=\|A\|^2_F$ in Algorithm \ref{algo:main}. In this particular scenario, we can derive the same conclusion as stated in Theorem  \ref{theo:main}.
\end{remark}
	
Let us explain how to  choose the parameters $\alpha$ and $\beta$ such that $\gamma_1+\gamma_2<1$ is satisfied in Theorem \ref{theo:main}. Indeed, set
$$
\tau_1:=4-3\alpha\frac{\sigma_{\min}^2(A)}{\|A\|^2_F}
\ \text{and} \
\tau_2:=(2\alpha-\alpha^2)\frac{\sigma_{\min}^2(A)}{\|A\|^2_F}.
$$
Let $\alpha\in(0,1]$, then we have $\tau_2>0$ and the condition $\gamma_1+\gamma_2<1$ now is satisfied for all
\begin{equation}\label{xie-equ-0626}
0\leq \beta<\frac{1}{8}\bigg(\sqrt{\tau_1^2+16\tau_2}-\tau_1\bigg).
\end{equation}

Finally, let us compare the convergence rates  obtained in Theorem \ref{theo:main} and  Remark \ref{remark-xie-0624}.
From the definition of $\gamma_1$ and $\gamma_2$, we know that convergence rate $q(\beta)$  in Theorem \ref{theo:main} can be viewed as a function of $\beta$. We further assume that $0<\alpha<\min\left\{2,\frac{4\|A\|^2_F}{3\sigma^2_{\min}(A)}\right\}$ so that $\tau_1\geq0$. We note that $\frac{4\|A\|^2_F}{3\sigma^2_{\min}(A)}\geq2$ can be easily satisfied in practice, for instance, when $\mathrm{Rank}(A)\geq2$. Then we have
$$
q(\beta)\geq\gamma_1+\gamma_2=4\beta^2+\tau_1\beta-\tau_2+1\geq 1-\tau_2
=1-(2\alpha-\alpha^2)\frac{\sigma_{\min}^2(A)}{\|A\|^2_F}=q(0).
$$
Since the lower bound $q$ is an increasing function of $\beta$, we know  that the convergence rate for mGRK is always inferior to that of GRK. Although numerical experiments suggest that mGRK performs better than GRK in practice, it is challenging to achieve a better convergence rate in theory for mGRK.  One possible approach to overcome this problem is to use the adaptive strategy proposed in \cite{zeng2023adaptive}. % is a possible approach to fix this  problem.
 %Also, for any $\beta$ the rate is
%always inferior to that of Algorithm \ref{PFR1} in Theorem \ref{THMr-in}.

	% With same rationale of Theorem \ref{theo:basic}, the result is also deterministic. Moreover, the parameters $ a_1,a_2 $ in Theorem \ref{theo:main} is smaller than that of the existing result in \cite{LoizouMomentumstochasticmomentumstochasticgradientNewton20}, which is related to both $ \simi $ and $ \sima $,
%	\[ a_1 = 2\be^2 + 3\be+ 1 - (\al\be + 2\al - \al^2)\frac{\simi^2(A)}{\nmfsq{A}} , \quad a_2  = 2\be^2 + \be + \al\be\frac{\sima^2(A)}{\nmfsq{A}} . \]
%	It means the coefficient $ q $ in the convergence rate is also better than \cite{LoizouMomentumstochasticmomentumstochasticgradientNewton20}.
	
%	However, since
%	\[ q(\be) \geq a_1+a_2 = 1 - (2\al-\al^2)\frac{\simi(A)^2}{\nmfsq{A}} + \be(4\be+4-2\al\frac{\simi^2(A)}{\nmfsq{A}}) \]
%	is a quadratic function of $ \be $ and its minimizer is $ \be = -1 + \frac{1}{2} \al \frac{\simi^2(A)}{\nmfsq{A}} < 0, $ the bound for mGRK is also larger than GRK. Although numerical experiments show that mGRK performs better than GRK in practice, it seems that it is hard to achieve a better convergence in theory. The adaptive strategy in \cite{zeng_adaptive_2023,zeng_randomized_2023} may be a possible approach to fix this open problem.
	
\section{Numerical Experiments}

In this section, we present some numerical results for the mGRK method for solving linear systems. We also compare the mGRK method with the GRK method and the stochastic conjugate gradient (SCG) method \cite[Algorithm 4]{zeng2023adaptive} on a variety of test problems. Our numerical results indicate that incorporating momentum in the GRK method can lead to improved convergence and efficiency in solving linear systems.
 All methods are implemented in  {\sc Matlab} R2022a for Windows $10$ on a desktop PC with the  Intel(R) Core(TM) i7-10710U CPU @ 1.10GHz  and 16 GB memory.

We use the following two types of coefficient matrices. One is matrices randomly generated by the {\sc Matlab} function {\tt randn}. For given $m, n, r$, and $\kappa>1$, we construct a dense matrix $A$ by $A=U D V^\top$, where $U \in \mathbb{R}^{m \times r}, D \in \mathbb{R}^{r \times r}$, and $V \in \mathbb{R}^{n \times r}$. Using {\sc Matlab}  notation, these matrices are generated by {\tt [U,$\sim$]=qr(randn(m,r),0)}, {\tt [V,$\sim$]=qr(randn(n,r),0)}, and {\tt D=diag(1+($\kappa$-1).*rand(r,1))}. So the condition number and the rank of $A$ is upper bounded by $\kappa$ and $r$, respectively.
%The random sparse matrix is constructed by the {\sc Matlab} sparse random matrix function {\tt sprandn(m,n,density,rc)}, where {\tt density} is the percentage of nonzero entries and {\tt rc} is the reciprocal of the condition number.
%We also construct a random sparse matrix by using the {\sc Matlab} sparse random matrix function {\tt sprandn(m,n,density,rc)}, where {\tt density} is the percentage of nonzero entries and {\tt rc} is the reciprocal of the condition number.
Another type data is the real-world data from SuiteSparse Matrix Collection \cite{Kol19}.

In our implementations, to ensure the consistency of the linear system, we first generate the solution by $x={\tt randn(n,1)}$ and then set $b=Ax$. All computations are started from the initial vector $x^{(0)}=0$.  We stop the algorithms if the relative solution error (RSE)
$\frac{\|x\k-A^\dagger b\|^2_2}{\|A^\dagger b\|^2_2}\leq10^{-12}$. All the results are averaged over $20$ trials and we report the average number of iterations (denoted as Iter) and the average computing time in seconds (denoted as CPU). We set the step-size $\alpha=1$, the relaxation parameter $\theta=\frac{1}{2}$, and $\Gamma_k=\|A\|^2_F$ (see Remark \ref{remark-xie-114}) for the mGRK method. We use \eqref{xie-230624} as the  probability criterion for selecting the working row. For the SCG method  \cite[Algorithm 4]{zeng2023adaptive}, at the $k$-th step, the index $ i_k$ is selected randomly with probability $\prob(i_k = i) = \frac{\|a\i\|^2_2}{\|A\|^2_F}$.

Figure \ref{figue1p} illustrates a comparison of the performance between the iGRK method and the GRK method. For the iGRK method, we set $\Gamma_0= \nmfsq{A}$ and $\Gamma_k=\nmfsq{A}-\|a_{i_{k-1}}\|^2_2$ for $k\geq1$, as mentioned in Remark \ref{remark-xie-p}. As depicted in Figure \ref{figue1p}, the iGRK method exhibits slightly improved performance compared to the GRK method. Based on this observation,  our subsequent test will focus solely  on comparing   the GRK method and the mGRK method.

\begin{figure}[hptb]
	\centering
	\begin{tabular}{cc}
		\includegraphics[width=0.4\linewidth]{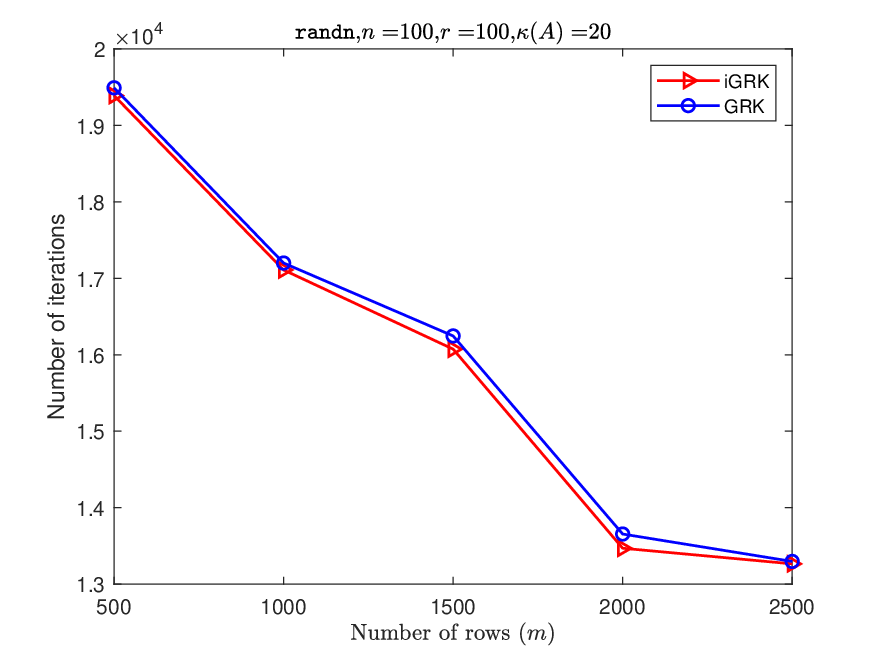}
		\includegraphics[width=0.4\linewidth]{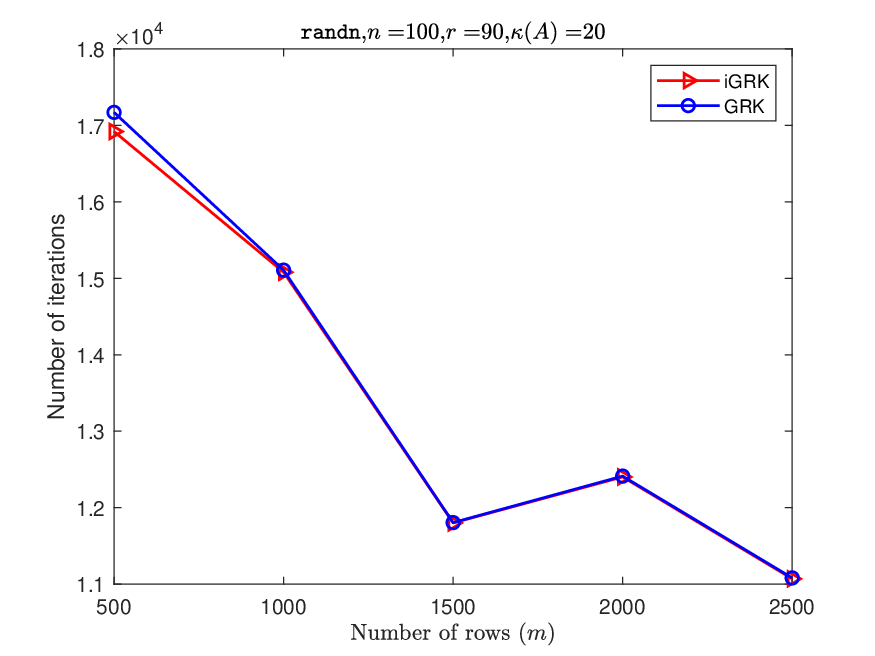}
	\end{tabular}
	\caption{Figures depict the number of iterations vs increasing number of rows for the case of the random matrix. The title of each plot indicates the values of $n,r$, and $\kappa$. }
	\label{figue1p}
\end{figure}

Figures \ref{figue1} and \ref{figue2} illustrate our experimental results with different choices of the momentum parameter $\beta$. When $\beta=0$, the mGRK method is exactly the GRK method.
We note that in all of the presented tests, the momentum parameters $\beta$ of the methods are chosen as non-negative constants that do not depend on parameters not known to the users, such as $\sigma^2_{\min}(A)$.  It is evident that the incorporation of the momentum term has resulted in an improvement in the performance of the GRK method. We can observe that $\beta=0.4$ is consistently a good choice for achieving sufficiently fast convergence of the mGRK method on random matrices. While for the datasets {\tt WorldCities} and {\tt crew1}, we find that $\beta=0.6$ is a good option for {\tt WorldCities} (we have not plotted the case where $\beta=0.7$ as the mGRK method is observed to be divergent in this case) and $\beta=0.4$ is a good option for {\tt crew1}. This indicates that we need to select appropriate values of $\beta$ for handing different types of data.
 %and $\beta=0.6$ is a good option for the data {\tt WorldCities} and {\tt aloi} (we do not plot the case where $\beta=0.7$ because the mGRK method is now divergent).

\begin{figure}[hptb]
	\centering
	\begin{tabular}{cc}
\includegraphics[width=0.4\linewidth]{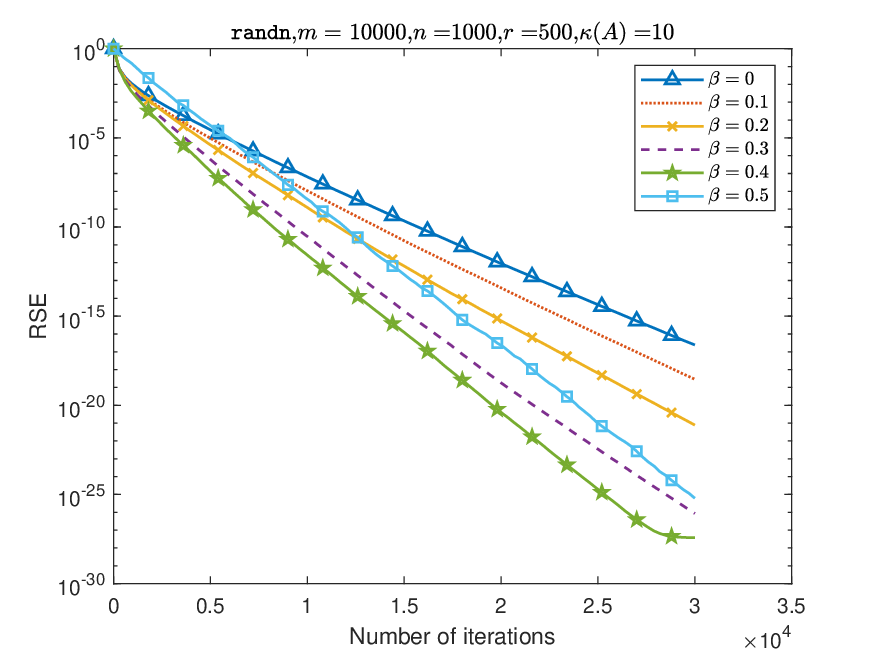}
\includegraphics[width=0.4\linewidth]{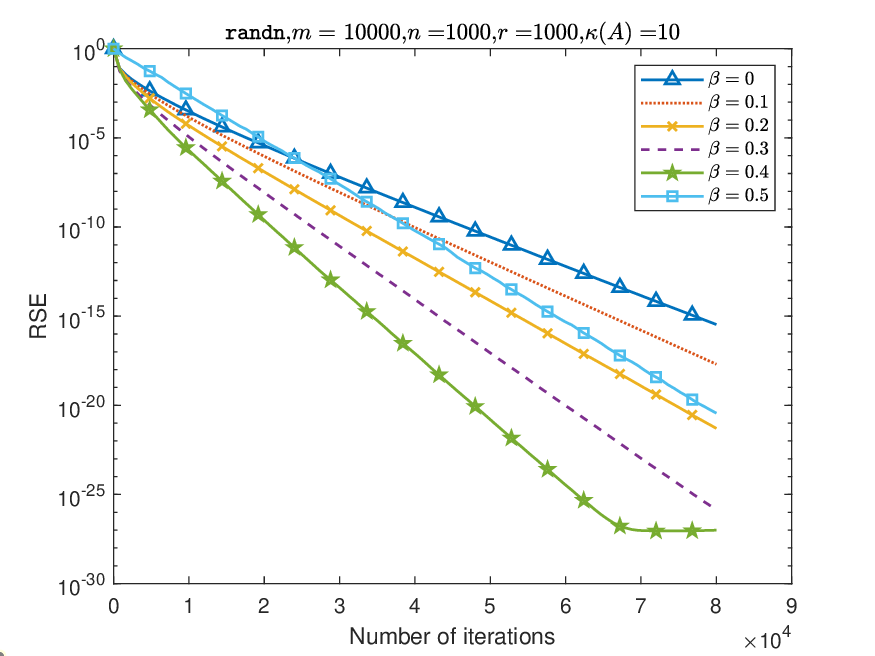}
	\end{tabular}
	\caption{Performance of mGRK with different momentum parameters $\beta$. The coefficient matrices are random matrices and the title of each plot indicates the values of $m,n,r$, and $\kappa$. }
	\label{figue1}
\end{figure}

\begin{figure}[hptb]
	\centering
	\begin{tabular}{cc}
\includegraphics[width=0.4\linewidth]{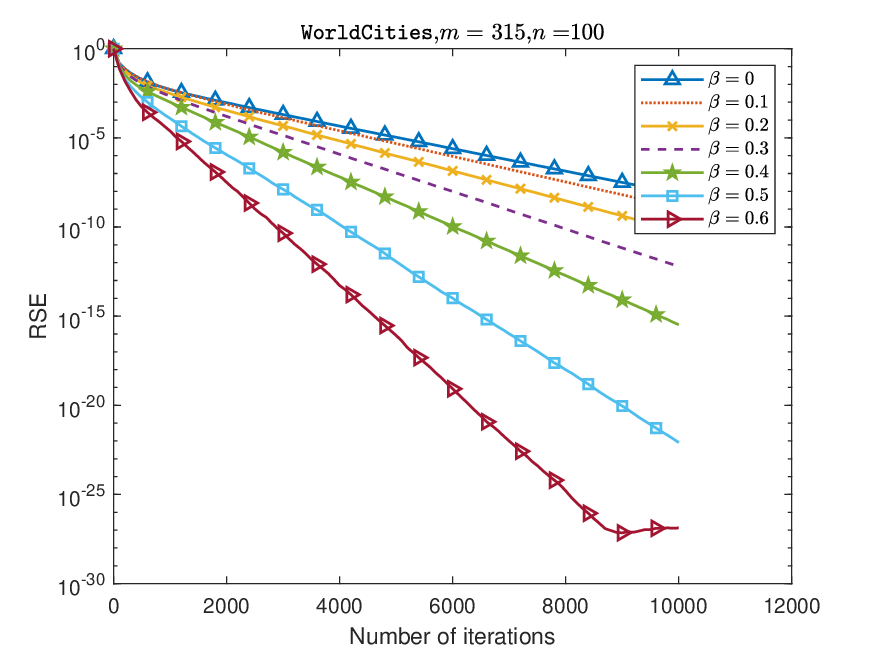}
\includegraphics[width=0.4\linewidth]{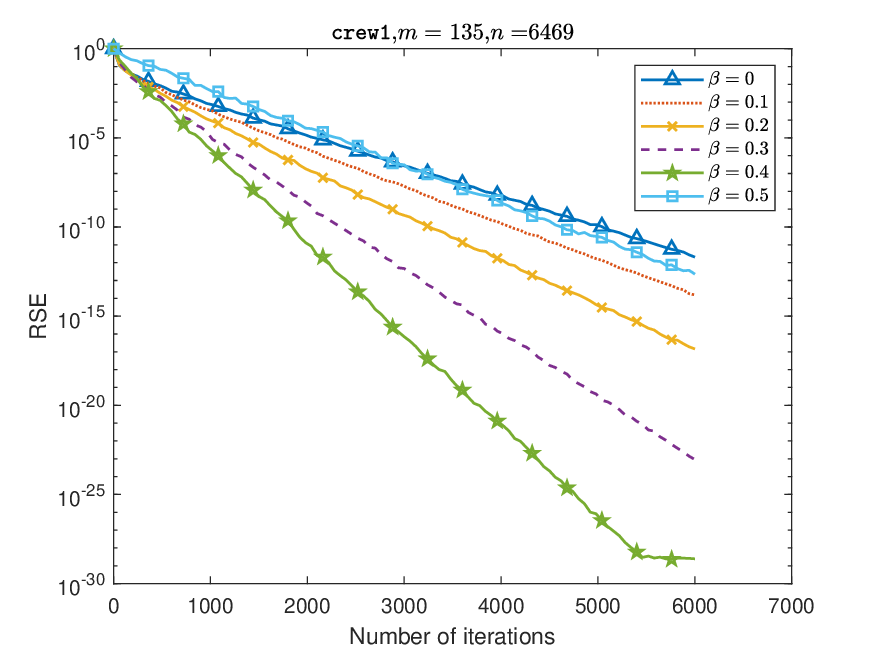}
	\end{tabular}
	\caption{Performance of mGRK with different momentum parameters $\beta$ with coefficient matrices from SuiteSparse Matrix Collection \cite{Kol19}.  }
	\label{figue2}
\end{figure}

In Figure \ref{figue3}, we  present the computing time of GRK, SCG, and mGRK with random matrices $A$, where $m=1000,2000,\ldots,10000,n=100$, $\kappa(A)=10$ (top) or $\kappa(A)=40$ (below), and $r=100$ (left) or $r=90$ (right). From the figure, we can observe that the GRK method and the SCG method exhibit comparable performance. Particularly, if the condition number of the coefficient matrix is larger, the SCG method may outperform the GRK method. Additionally, regardless of whether the coefficient matrix $A\in\mathbb{R}^{m\times n}$ is full rank or rank-deficient, the mGRK method outperforms both the GRK method and the SCG method in terms of CPU time. Specifically, the mGRK method is approximately two times faster than the GRK method.
%It can be seen that the GRK method is comparable  with the SCG method. Particularly, if the condition number of the coefficient is larger, then the SCG method may perform better than the GRK method.  It can be also found that the mGRK method outperforms both the GRK method and the SCG method in terms of CPU times, regardless of whether the coefficient matrix $A\in\mathbb{R}^{m\times n}$ is full rank or rank-deficient. The mGRK method is about two-times faster than the GRK method.

\begin{figure}[hptb]
	\centering
	\begin{tabular}{cc}
\includegraphics[width=0.4\linewidth]{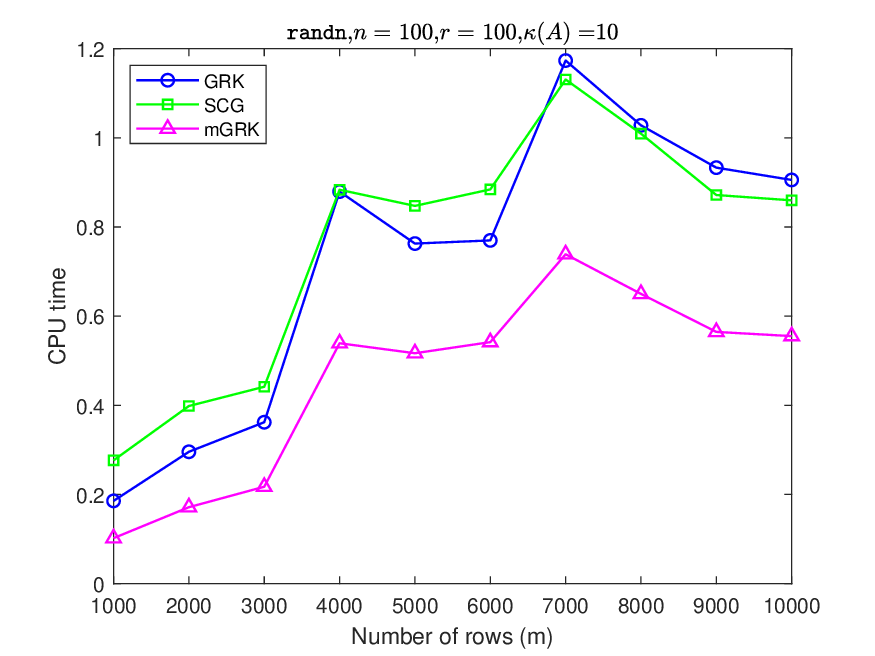}
\includegraphics[width=0.4\linewidth]{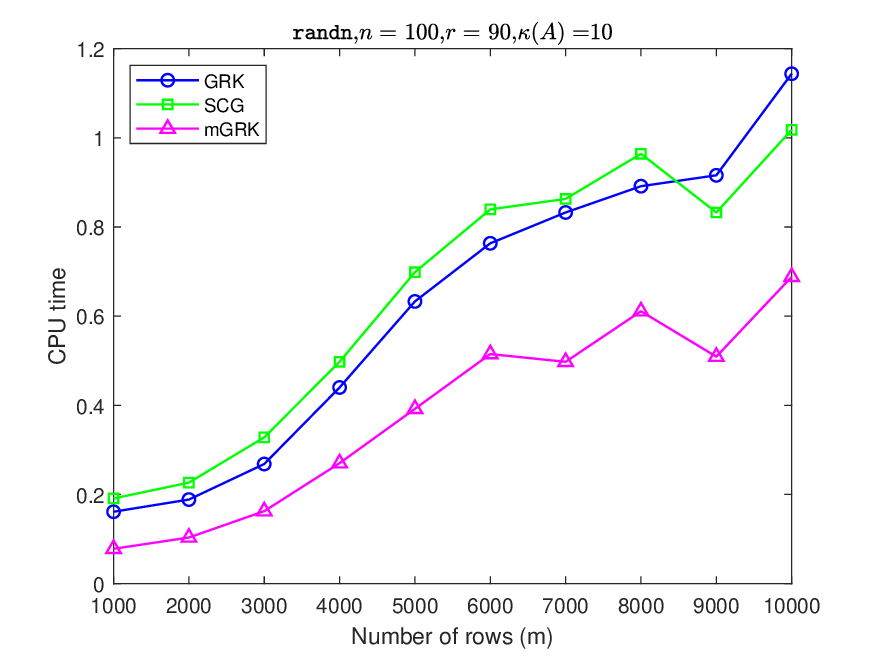}\\
\includegraphics[width=0.4\linewidth]{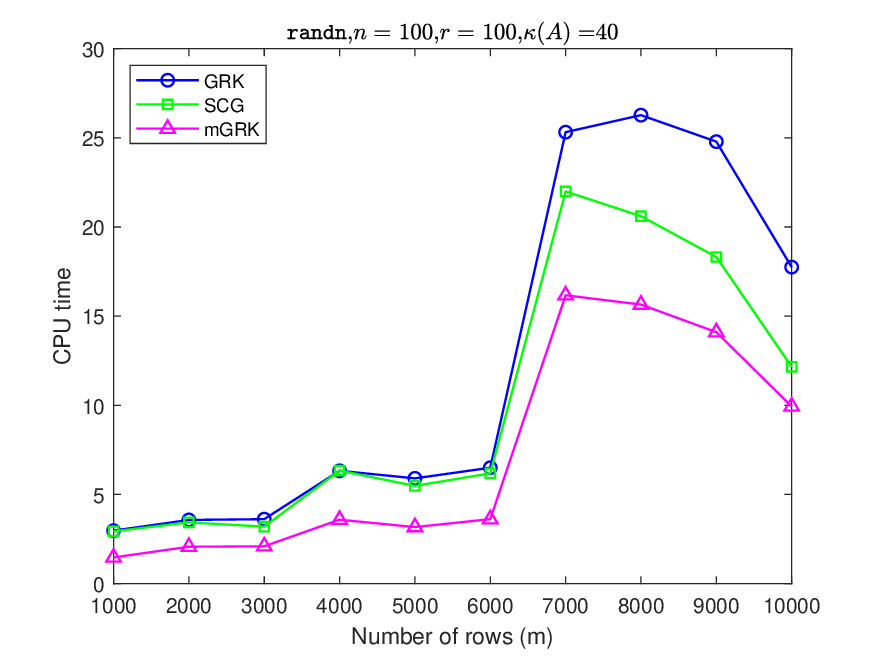}
\includegraphics[width=0.4\linewidth]{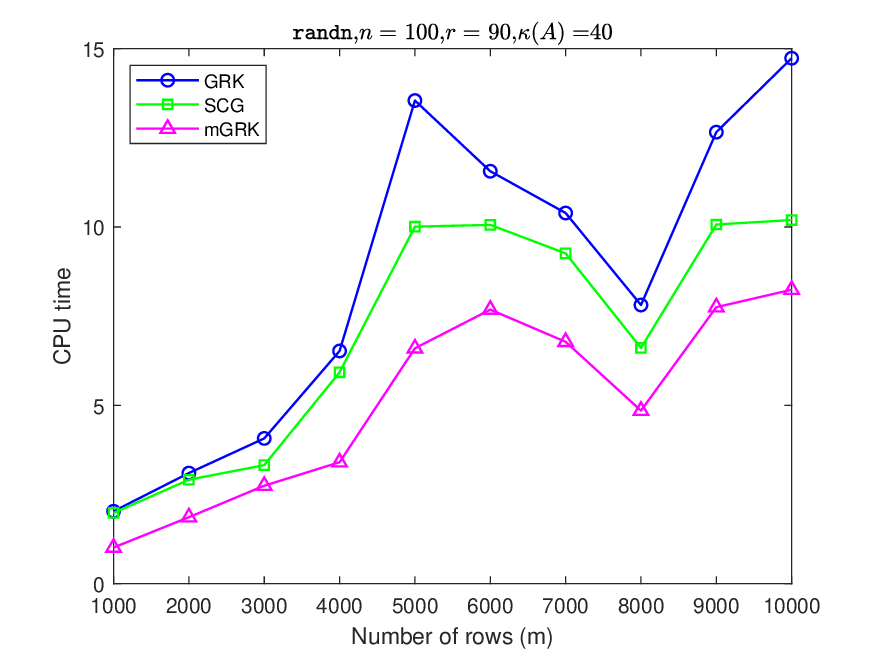}
	\end{tabular}
	\caption{Figures depict the CPU time (in seconds) vs increasing number of rows for the case of the random  matrix.  The title of each plot indicates the values of $n,r$, and $\kappa$. We set $\beta=0.4$ for the mGRK method.}
	\label{figue3}
\end{figure}

Table \ref{table1} presents the iteration counts and computing times for the GRK, SCG, and mGRK methods when applied to sparse matrices obtained from the SuiteSparse Matrix Collection \cite{Kol19}. The matrices used in the experiments include {\tt bibd\_16\_8}, {\tt crew1}, {\tt WorldCities}, {\tt nemsafm}, {\tt model1}, {\tt ash958}, {\tt Franz1}, and {\tt mk10-b2}. Some of these matrices are full rank, while others are rank-deficient. From Table \ref{table1}, it can be observed that both the GRK and mGRK methods are more effective than the SCG method. This is because, in the case of sparse matrices, the computation of the parameters $\varepsilon_k$, i.e., $Ax^{(k)}-b$, in both GRK and mGRK is significantly reduced. It can be also observed that the mGRK method with an appropriate momentum parameter generally exhibits better performance  than the GRK method.

\begin{table}
\renewcommand\arraystretch{1.5}
\setlength{\tabcolsep}{2pt}
\caption{ The average Iter and CPU of GRK, SCG, and mGRK for linear systems with coefficient matrices from SuiteSparse Matrix Collection \cite{Kol19}. The appropriate momentum parameters $\beta$ for mGRK are also listed.}
\label{table1}
\centering
{\scriptsize
\begin{tabular}{  |c| c| c| c| c |c |c |c| c|c| c| }
\hline
\multirow{2}{*}{ Matrix}& \multirow{2}{*}{ $m\times n$ }  &\multirow{2}{*}{rank}& \multirow{2}{*}{$\frac{\sigma_{\max}(A)}{\sigma_{\min}(A)}$}  &\multicolumn{2}{c| }{GRK}  &\multicolumn{2}{c| }{SCG} &\multicolumn{3}{c| }{mGRK}
\\
\cline{5-11}
& &   &    & Iter & CPU    & Iter & CPU  & Iter & CPU &$\beta$    \\%[0.1cm]
\hline
{\tt bibd\_16\_8}& $120\times12870$ &  120  & 9.54 &     2191.60  &   4.9351 &  5985.90  &   9.1321 &   983.40  &   {\bf 2.1297}  & 0.4 \\
\hline
{\tt crew1} & $135\times6469 $ &  135  &18.20  &   6068.40  &   3.3238 & 29097.90  &   9.3263 &  2099.40  &   {\bf 1.0639} &0.4\\
\hline
{\tt WorldCities} & $315\times100$ &  100  &6.60  &   14447.20  &   1.2846 & 63821.40  &   1.0471 &  6418.30  &   {\bf 0.5549}  & 0.6  \\
\hline
%%\\[0.2cm]
{\tt nemsafm} & $334\times 2348$ &   334  &4.77  &    2555.40  &   0.2782 & 20033.90  &   1.4815 &  2052.40  &   {\bf 0.2097} &0.1 \\
\hline
{\tt model1} & $  362\times798 $ &  362  & 17.57 &11608.20  &   0.8137 & 122107.60  &   4.0335 &  5609.20  &   {\bf0.3951} &0.3\\
\hline
{\tt ash958} & $958\times292$ &  292  &3.20  &  1619.60  &   0.0931 & 11480.30  &   0.2067 &  1448.10  &  {\bf 0.0786}  & 0.1\\
\hline
{\tt Franz1} & $ 2240\times768 $ &  755  & 2.74e+15 &  15240.20  &   1.3315 & 68401.80  &   2.3409 &  5748.20  &   {\bf0.5248} &0.3 \\
\hline
{\tt mk10-b2} & $  3150\times630 $ &  586 & 2.74e+15 &2326.20  &   0.2386 & 16447.00  &   0.7240 &  2285.30  &   {\bf0.2086}  &0.1\\
\hline
\end{tabular}
}
\end{table}

%\begin{figure}[hptb]
%	\centering
%	\begin{tabular}{cc}
%\includegraphics[width=0.4\linewidth]{CPU-m-1000.eps}
%%\includegraphics[width=0.4\linewidth]{10000-1000-1000-10.eps}
%	\end{tabular}
%	\caption{Figures depict the CPU time (in seconds) vs increasing number of columns for the case of the random  matrix.  The title of each plot indicates the values of $m$ and $\kappa$. }
%	\label{figue3}
%\end{figure}

\section{Concluding remarks}
In this paper, we proved that the linear convergence of the greedy randomized Kaczmarz (GRK) method is deterministic. Moreover, we showed that the deterministic convergence can be inherited to the heavy ball momentum variant of the GRK method. The preliminary numerical results showed that the mGRK method performs better than the GRK method.
%A problem is that the convergence bound of mGRK  is worse than that of GRK. The adaptive stepsize for momentum method in may be constructive.

It can be seen from \eqref{xie-equ-0626} that, theoretically, the optimal choices of $\alpha$ and $\beta$ for the mGRK method require knowledge of the smallest nonzero singular value of matrix, which is often not  accessible.
%In addition, during our experiment, we can also find that that we need to select appropriate parameters $\beta$ for  different types of data.
Furthermore, during our experiments, we have observed the need to select suitable parameters $\beta$ for different types of data. Hence, it would  be a valuable topic to investigate the GRK method with adaptive heavy ball momentum \cite{zeng2023adaptive}, where the parameters can be learned adaptively using iterative information. This adaptive approach could potentially overcome the challenge of selecting optimal parameters and improve the overall performance of the GRK method in various scenarios.
%explore the extension of the ASHBM method to handle inconsistent linear systems.
	
\bibliographystyle{plain}
\bibliography{references1}

\end{document}